\numberwithin{equation}{section}
\providecommand{\keywords}[1]{{2010}\textit{ Mathematics Subject Classification. }#1}
\newcommand{\Z}{\mathbb{Z}}
\newcommand{\Q}{\mathbb{Q}}
\newcommand\FF{\mathbb{F}}
\newcommand\Norm{\mathrm{N}}
\newtheorem{lemma}{Lemma}[section]
\newtheorem{theorem}[lemma]{Theorem}
\newtheorem{prop}[lemma]{Proposition}
\title{\vspace{-\baselineskip}\sffamily\bfseries The $16$-rank of $\Q(\sqrt{-p})$}
\author[1]{Peter Koymans\thanks{Niels Bohrweg 1, 2333 CA Leiden, Netherlands, p.h.koymans@math.leidenuniv.nl}}
\affil[1]{Mathematisch Instituut, Leiden University}
\date{\today}
\begin{document}
\maketitle

\begin{abstract}
Recently, a density result for the $16$-rank of $\text{Cl}(\Q(\sqrt{-p}))$ was established when $p$ varies among the prime numbers, assuming a short character sum conjecture. In this paper we prove the same density result unconditionally.
\end{abstract}
\keywords{11R29, 11R45, 11N45}

\section{Introduction}
If $K$ is a quadratic number field with narrow class group $\text{Cl}(K)$, there is an explicit description of $\text{Cl}(K)[2]$ due to Gauss. Since then the class group of quadratic number fields has been extensively studied. If one is interested in the $2$-part of the class group, i.e. $\text{Cl}(K)[2^\infty]$, the explicit description of $\text{Cl}(K)[2]$ is often very useful. It is for this reason that our current understanding of the $2$-part of the class group is much better than the $p$-part for odd $p$.

In 1984, Cohen and Lenstra put forward conjectures regarding the average behavior of the class group $\text{Cl}(K)$ of imaginary and real quadratic fields $K$. Despite significant effort, there has been relatively little progress in proving these conjectures. Almost all major results are about the $2$-part with the most notable exception being the classical result of Davenport and Heilbronn \cite{DH} regarding the distribution of $\text{Cl}(K)[3]$. Very little is known about $\text{Cl}(K)[p]$ for $p > 3$. The non-abelian version of Cohen-Lenstra has recently also attracted great interest, see \cite{Alberts}, \cite{AK}, \cite{Klys} and \cite{Wood}.

Gerth \cite{Gerth1} studied the distribution of $2\text{Cl}(K)[4]$, when the number of prime factors of the discriminant of $K$ is fixed. Fouvry and Kl\"uners \cite{FK2} computed all the moments of $2\text{Cl}(K)[4]$, when $K$ varies among imaginary or real quadratic fields. In the paper \cite{FK1}, they deduced the probability that the $4$-rank of a quadratic field has a given value. Their work was based on earlier ideas of Heath-Brown \cite{HB}.

The study of $\text{Cl}(K)[2^\infty]$ has often been conducted through the lens of \emph{governing fields}. Let $k \geq 1$ be an integer and let $d$ be an integer with $d \not \equiv 2 \bmod 4$. For a finite abelian group $A$ we define the $2^k$-rank of $A$ to be $\text{rk}_{2^k} \ A := \dim_{\FF_2} 2^{k - 1} A/2^k A$. Then a governing field $M_{d, k}$ is a normal field extension of $\Q$ such that 
\[
\text{rk}_{2^k} \text{Cl}\left(\Q\left(\sqrt{dp}\right)\right)
\]
is determined by the splitting of $p$ in $M_{d, k}$. Cohn and Lagarias \cite{CohnLag} were the first to define the concept of a governing field, and conjectured that they always exist.

If $k \leq 3$, then governing fields are known to exist for all values of $d$. In case $k = 2$ this follows from work of R\'edei \cite{Redei} and Stevenhagen dealt with the case $k = 3$ \cite{Ste1}. The topic was recently revisited by Smith \cite{Smith1}, who found a very explicit description for $M_{d, 3}$ for most values of $d$. He then used this description to prove density results for $4\text{Cl}(K)[8]$ assuming GRH. Not much later Smith \cite{Smith2} introduced \emph{relative governing fields}, which allowed him to prove the most impressive result that $2\text{Cl}(K)[2^\infty]$ has the expected distribution when $K$ varies among all imaginary quadratic fields.

If we let $P(d, k)$ be the statement that a governing field $M_{d, k}$ exists, then there is currently not a single value of $d$ for which the truth or falsehood of $P(d, 4)$ is known. This has been the most significant obstruction in proving density results for the $16$-rank in thin families of the shape $\left\{\Q\left(\sqrt{dp}\right)\right\}_{p \text{ prime}}$.

This barrier was first broken by Milovic \cite{Milovic2}, who dealt with the $16$-rank in the family $\left\{\Q\left(\sqrt{-2p}\right)\right\}_{p \equiv -1 \bmod 4}$. Milovic proves his density result with Vinogradov's method, and does not rely on the existence of a governing field. His use of Vinogradov's method was inspired by work of Friedlander et al. \cite{FIMR, FIMRE}, which is based on earlier work of Friedlander and Iwaniec \cite{FI1}.

Milovic and the author established density results for the families $\left\{\Q\left(\sqrt{-2p}\right)\right\}_{p \equiv 1 \bmod 4}$ and $\left\{\Q\left(\sqrt{-p}\right)\right\}_{p}$, see respectively \cite{KM2} and \cite{KM1} with the latter work being conditional on a short character sum conjecture. Both \cite{KM2} and \cite{KM1} follow the ideas of \cite{FIMR, FIMRE} closely in their treatment of the sums of type I, see Section \ref{sSieve} for a definition. However, if one applies the method of \cite{FIMR, FIMRE} to a number field of degree $n$, one is naturally lead to consider character sums of modulus $q$ and length $q^{\frac{1}{n}}$.

In \cite{KM2} we apply the method from \cite{FIMR, FIMRE} to a number field of degree $4$. This leads to character sums just outside the range of the Burgess bound. Fortunately, the lemmas in Section 3.2 of \cite{KM2} allow us to reduce the size of the modulus from $q$ to $q^{\frac{1}{2}}$, and this enables us to deal with the sums of type I unconditionally. In \cite{KM1} we use a criterion for the $16$-rank of $\Q(\sqrt{-p})$ due to Bruin and Hemenway \cite{BH}, and this criterion is stated most naturally over $\Q\left(\zeta_8, \sqrt{1 + i}\right)$, which has degree $8$. The resulting character sums are far outside the reach of the Burgess bound and we resort to assuming a short character sum conjecture, see \cite[p.\ 8]{KM1}.

In this paper we manage to deal with the $16$-rank of $\Q(\sqrt{-p})$ unconditionally by using a criterion of Leonard and Williams \cite{LW82}, which one can naturally state over $\Q(\zeta_8)$. However, the Leonard and Williams criterion has the significant downside that it is the product of two residue symbols instead of one residue symbol, namely a quadratic and a quartic residue symbol. The resulting sums of type I can still not be treated unconditionally with the method from \cite{FIMR, FIMRE}. Instead, we use a rather ad hoc argument to deal with the resulting character sum.

\begin{theorem}
\label{tMain}
Let $h(-p)$ be the class number of $\Q(\sqrt{-p})$. Then
\[
\lim_{X \rightarrow \infty} \frac{|\{p \textup{ prime} : p \leq X \textup{ and } 16 \mid h(-p)\}|}{|\{p \textup{ prime} : p \leq X\}|} = \frac{1}{16}.
\]
\end{theorem}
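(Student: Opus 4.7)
The plan is to build on the framework of \cite{KM1, KM2} but to replace the Bruin--Hemenway criterion (which naturally lives over the degree-$8$ field $\Q(\zeta_8, \sqrt{1+i})$) by the Leonard--Williams criterion of \cite{LW82}, which is naturally stated over $\Q(\zeta_8)$. Since $\Q(\zeta_8)$ has degree $4$ instead of $8$, the associated character sums will be closer to the range in which unconditional estimates are available, which is precisely what forced the short character sum conjecture in \cite{KM1}.

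I would first peel off the lower layers of the $2$-power filtration. By genus theory, $2 \mid h(-p)$ iff $p \equiv 1 \pmod 4$; by R\'edei's criterion, $4 \mid h(-p)$ iff $p \equiv 1 \pmod 8$; and by Stevenhagen's result \cite{Ste1} on the existence of $M_{-1, 3}$, the condition $8 \mid h(-p)$ is detected by the Frobenius of $p$ in a known normal $2$-extension of $\Q$, and by Chebotarev cuts out a set of density $1/8$ among primes. It thus suffices to show that, among primes $p$ satisfying these conditions, exactly half satisfy the Leonard--Williams criterion for $16 \mid h(-p)$.

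The Leonard--Williams criterion encodes this remaining $16$-rank condition as $\chi(p) = +1$, where $\chi$ is a product $\chi_2 \cdot \chi_4$ of a quadratic and a quartic residue symbol over $\Z[\zeta_8]$ evaluated at elements determined by the $8$-rank data of $p$. The analytic heart of the paper should then be to prove cancellation of the form
\[
\sum_{\substack{p \leq X \\ \text{admissible}}} \chi(p) = o(\pi(X)),
\]
where ``admissible'' denotes the congruence and splitting conditions above. I would apply a Vaughan-type identity to decompose this sum into sums of type I and type II, in the style of \cite{FIMR, FIMRE, KM1, KM2}. The sums of type II should be amenable to Cauchy--Schwarz followed by Poisson summation in $\Z[\zeta_8]$ together with Deligne's bound for the resulting exponential sums, essentially following the strategy developed in \cite{KM2}.

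The main obstacle is the sums of type I. The FIMR scheme applied to the degree-$4$ ring $\Z[\zeta_8]$ produces character sums of modulus $q$ and length $q^{1/4}$, which lie outside Burgess' range. The reduction-of-modulus trick of \cite[Section 3.2]{KM2} does not apply directly because $\chi$ is a product of two residue symbols of different orders rather than a single one. My plan is an ad hoc argument: decouple $\chi_2$ and $\chi_4$ by a hyperbola-style or dyadic decomposition that essentially fixes the value of $\chi_2$ on each piece; push the quadratic factor onto the outer summation variable using quadratic reciprocity in $\Q(\zeta_8)$; and then bound the remaining pure quartic character sum using Weil-type cancellation, exploiting the additional averaging provided by the Vaughan decomposition. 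The delicate point, and the expected main difficulty, will be to balance the parameters so that this ad hoc type-I estimate, combined with the type-II bound, yields an honest power saving over $\pi(X)$.
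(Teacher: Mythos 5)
Your high-level framework is correct and matches the paper: replace Bruin--Hemenway by the Leonard--Williams criterion so that the relevant arithmetic lives in $\Q(\zeta_8)$ rather than a degree-$8$ field, encode the lower $2$-power conditions as congruence/splitting conditions, construct a sequence $a_{\mathfrak{n}}$ on ideals of $\Z[\zeta_8]$, and run Vinogradov's method via Proposition 5.1 of \cite{FIMR, FIMRE}. You also correctly identify that the type I sums are the obstacle and that a naive application of the FIMR scheme over $\Z[\zeta_8]$ leads to character sums of length $q^{1/4}$ with modulus $q$, outside the Burgess range.

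However, your proposed ad hoc argument for the type I sums would not work, and this is where the paper's real contribution lies. Your plan is to \emph{decouple} the quadratic symbol from the quartic symbol, freeze the quadratic one, and bound the remaining pure quartic character sum by Weil-type estimates. Decoupling does nothing to shrink the effective modulus: a quartic character sum of length $q^{1/4}$ with conductor $q$ is still a genuinely short incomplete sum, and Weil's bound (which controls complete sums) gives no savings. The paper does the opposite of decoupling: after applying reciprocity and rewriting both $[w]_1 = (\tfrac{g}{w})_{4,M}$ and $[w]_2 = (\tfrac{2h}{g})$ in coordinates $w = a + b\zeta_8 + c\zeta_8^2 + d\zeta_8^3$, one finds a \emph{hidden cancellation between the two symbols}. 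Concretely, the quadratic symbol $[w]_2$ produces a Jacobi symbol $\left(\tfrac{b^2+d^2}{v_1}\right)$, while the quartic symbol $[w]_1$ produces a factor $\left(\tfrac{a+ci}{e'}\right)_{2,\Q(i)}$ where $e'$ is the odd primitive part of $b+di$ in $\Z[i]$ (so $\Norm e' \mid b^2+d^2$). Using the congruence $v \equiv -d(1+i)(a+ci) \bmod \gamma_1$ with $\Norm\gamma_1$ the squarefree part of $b^2+d^2$, these two factors are shown to agree up to controlled pieces and coprimality indicators; hence they essentially annihilate each other in the product $[w]_1[w]_2$. What survives is a single quadratic character $\left(\tfrac{v_1}{2c^2+(b-d)^2}\right)$, in which the modulus $2c^2+(b-d)^2$ is of size $\approx X^{1/2}$ while $v_1$ is (essentially) a linear function of the free variable $a$ ranging over an interval of length $\approx X^{1/4}$. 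This is length $\approx q^{1/2}$ relative to the modulus, squarely inside Burgess' range with $r=2$, which is how the unconditional saving is obtained. So the modulus reduction comes precisely from coupling the two residue symbols, not from separating them.

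Two smaller points. For the type II sums, the paper does not use Poisson summation or Deligne's bound; it uses H\"older followed by Cauchy--Schwarz (as in \cite{FIMR, KM2}) together with an elementary algebraic vanishing: the complete sum $\sum_{\zeta \bmod \Norm(w)} \gamma_1(w,\zeta)\gamma_3(w,\zeta)$ is identically zero when $\Norm(w)$ is not squarefull. And rather than invoking Chebotarev for the $8$-rank condition and then sieving, the paper bakes the $8 \mid h(-p)$ condition directly into the definition of $a_{\mathfrak{n}}$ via a congruence on a generator $w$ modulo a power of $2$, which is cleaner inside the Vinogradov machinery.
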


Milovic \cite{Milovic1} has previously shown that there are infinitely many primes $p$ with $16$ dividing $h(-p)$. Theorem \ref{tMain} gives an affirmative answer to conjectures in both \cite{CohnLag2} and \cite{Ste2}. For $p$ a prime number, we define $e_p$ by
\begin{align}
\label{edep}
e_p := 
\left\{
	\begin{array}{ll}
		1  & \mbox{if } 16 \mid h(-p) \\
		-1 & \mbox{if } 8 \mid h(-p), 16 \nmid h(-p) \\
		0 & \mbox{otherwise.}
	\end{array}
\right.
\end{align}
Theorem \ref{tMain} is an immediate consequence of the following theorem.

\begin{theorem}
\label{tCancel}
We have
\[
\sum_{p \leq X} e_p \ll \frac{X}{\exp\left(\left(\log X\right)^{0.1}\right)}.
\]
\end{theorem}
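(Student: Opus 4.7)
The plan is to adapt the Vinogradov-style framework of \cite{FIMR, FIMRE, KM1, KM2} to the Leonard--Williams criterion \cite{LW82}. First I would invoke \cite{LW82} to express $e_p$, for $p$ in the congruence class and satisfying the genus condition that forces $8 \mid h(-p)$, as a product of a quadratic and a quartic residue symbol evaluated at a degree one prime $\pp$ of $\Z[\zeta_8]$ above $p$. Detecting the remaining conditions on $p$ with ray class characters in a suitable abelian extension, Theorem \ref{tCancel} reduces to showing cancellation of size $X / \exp((\log X)^{0.1})$ in a sum of the form
\[
\sum_{\substack{\Norm \pp \leq X \\ \pp \text{ degree one}}} \leg{\alpha}{\pp}_{2} \leg{\beta}{\pp}_{4},
\]
where the $\pp$ run over prime ideals of $\Z[\zeta_8]$ and $\alpha, \beta \in \Z[\zeta_8]$ depend mildly on $\pp$ through congruence conditions modulo a fixed ideal.

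The next step is a combinatorial decomposition of the prime indicator via a Heath-Brown or Vinogradov identity in $\Z[\zeta_8]$, reducing the prime sum to sums of type I
\[
\sum_{\Norm \mm \leq M} a_{\mm} \sum_{\Norm \nn \leq X/\Norm \mm} \leg{\alpha}{\mm \nn}_{2} \leg{\beta}{\mm \nn}_{4}
\]
and type II
\[
\sum_{\Norm \mm \leq M} \sum_{\Norm \nn \leq N} a_{\mm} b_{\nn} \leg{\alpha}{\mm \nn}_{2} \leg{\beta}{\mm \nn}_{4},
\]
for a suitable range of $M$ and $N$. The type II sums should be handled by Cauchy--Schwarz followed by an equidistribution estimate for the resulting second moment over $\nn$, closely paralleling \cite[\S 4]{KM2}.

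The crux is the type I sum. Since $\Q(\zeta_8)$ has degree $8$ over $\Q$, the inner sum over $\nn$ for fixed $\mm$ runs over a region that is essentially one-dimensional of length $(X/\Norm \mm)^{1/8}$, against characters of modulus comparable to $\Norm \mm$. This is well outside the Burgess range and is exactly the barrier that forced the short character sum conjecture in \cite{KM1}. My plan here is to exploit the fact that the Leonard--Williams symbol genuinely splits as a product of two symbols of different orders: decompose $\nn$ according to its relative norm to one of the intermediate subfields of $\Q(\zeta_8)$, namely $\Q(i)$, $\Q(\sqrt{2})$, or $\Q(\sqrt{-2})$. Under such a decomposition, one of the two residue symbols should descend to a character on a smaller field of degree $4$ or $2$, where Polya--Vinogradov or Burgess regains nontrivial cancellation, while the other factor is absorbed either into the outer coefficient or into a longer average over the complementary variable.

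The hardest part will be making this separation genuinely work. The obstruction to naively importing \cite{FIMR, FIMRE, KM2} is precisely that the symbol is not a single character of $\Z[\zeta_8]$ but a bilinear object of mixed order, so any descent to a subfield has to respect both the quadratic and quartic reciprocity data simultaneously. Proving that cancellation from one factor is not destroyed when the other is trivialised is the delicate point, and is where an ad hoc argument tailored to the arithmetic of $\Q(\zeta_8)$ will be unavoidable.
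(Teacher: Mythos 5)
Your framework — Vinogradov's method over $\Q(\zeta_8)$, the Leonard--Williams criterion, type I and type II estimates — matches the paper's, and you correctly locate the difficulty in the type I sums. But there are two problems.

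First, a factual slip: $\Q(\zeta_8)$ has degree $4$ over $\Q$, not $8$. The degree-$8$ field is $\Q(\zeta_8,\sqrt{1+i})$, which is what forced the conjectural hypothesis in \cite{KM1}. Working over $\Q(\zeta_8)$ already buys you one halving of the problem (the variable runs over a segment of length roughly $q^{1/4}$, not $q^{1/8}$), but $q^{1/4}$ is still at the edge of, not inside, the Burgess range, so your diagnosis of why Burgess fails needs to be corrected.

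Second, and more seriously, the type I mechanism you propose — decomposing $\nn$ by its relative norm to a quadratic subfield so that ``one of the two residue symbols should descend to a character on a smaller field'' — is not how the paper's argument goes, and as stated it does not close the gap. The paper's resolution is a concrete algebraic cancellation \emph{between} the two symbols, not a descent of one of them in isolation. Writing $w = a+b\zeta_8+c\zeta_8^2+d\zeta_8^3$, the quartic part $[w]_1=(g/w)_{4,M}$ factors via reciprocity into a piece over $\Q(\sqrt{-2})$ (which becomes a coprimality indicator) and a piece over $\Q(i)$ that reduces to the Jacobi symbol $\bigl((a+ci)/e'\bigr)_{2,\Q(i)}$, where $e'$ is the odd part of $b+di$. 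Meanwhile the quadratic part $[w]_2=(2h/g)$ becomes $(v/u)$, and the congruence $a^2(b-d)^2\equiv c^2(b+d)^2 \bmod v$ lets one rewrite $(v_1/(a^2+b^2+c^2+d^2))$ as $\bigl((b^2+d^2)(2c^2+(b-d)^2)/v_1\bigr)$. The crux is then that $\Norm e' = b^2+d^2$ up to controllable factors, and the congruence $v\equiv -d(1+i)(a+ci)\bmod \gamma_1$ (with $\gamma_1$ the squarefree part of $e'$) identifies $(b^2+d^2/v_1)$ with $\bigl((a+ci)/e'\bigr)_{2,\Q(i)}$ modulo small pieces. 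These two factors therefore annihilate, leaving the single rational Jacobi symbol $\bigl(v_1/(2c^2+(b-d)^2)\bigr)$ of modulus $\ll X^{1/2}$ over an $a$-range of length $X^{1/4}$, which \emph{is} inside Burgess. Your proposal names the need for an ``ad hoc argument tailored to $\Q(\zeta_8)$'' but does not supply the identity $a^2(b-d)^2\equiv c^2(b+d)^2\bmod v$, the matching of the factorizations of $b^2+d^2$ and $e'$, or the resulting cancellation, and without these the type I bound does not follow. That cancellation is the essential new content of the paper, not a technicality.
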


It is natural to wonder if the other conditional results in \cite{KM1} can be proven unconditionally using the methods from this paper. This is likely to be the case, but it would require some effort to obtain suitable algebraic results similar to the Leonard and Williams \cite{LW82} criterion used in this paper.

We believe that the ideas introduced by Smith \cite{Smith2} do not apply to the thin families that we deal with here. Indeed, in Smith's paper a crucial ingredient for both the algebraic and analytic part is the fact that a typical integer $N$ has roughly $\log \log N$ prime divisors and that $\log \log N$ goes to infinity as $N$ goes to infinity.

\subsection*{Acknowledgements}
I am very grateful to Djordjo Milovic for his support during this project. I would also like to thank Jan-Hendrik Evertse for proofreading.

\section{Preliminaries}
\subsection{Quadratic and quartic reciprocity}
Let $K$ be a number field with ring of integers $O_K$. We say that an ideal $\mathfrak{n}$ of $O_K$ is odd if $(\mathfrak{n}, 2) = (1)$. Similarly, we say that an element $w$ of $O_K$ is odd if the ideal generated by $w$ is odd. If $\mathfrak{p}$ is an odd prime ideal of $O_K$ and $\alpha \in O_K$, we define the quadratic residue symbol
\[
\left(\frac{\alpha}{\mathfrak{p}}\right)_{2, K} := 
\left\{
	\begin{array}{ll}
		1  & \mbox{if } \alpha \not \in \mathfrak{p} \text{ and } \alpha \equiv \beta^2 \bmod \mathfrak{p} \text{ for some } \beta \in O_K \\
		-1 & \mbox{if } \alpha \not \in \mathfrak{p} \text{ and } \alpha \not \equiv \beta^2 \bmod \mathfrak{p} \text{ for all } \beta \in O_K \\
		0 & \mbox{if } \alpha \in \mathfrak{p}.
	\end{array}
\right.
\]
Then Euler's criterion states
\[
\left(\frac{\alpha}{\mathfrak{p}}\right)_{2, K} \equiv \alpha^{\frac{\Norm(\mathfrak{p}) - 1}{2}} \bmod \mathfrak{p}.
\]
For a general odd ideal $\mathfrak{n}$ of $O_K$, we define
\[
\left(\frac{\alpha}{\mathfrak{n}}\right)_{2, K} := \prod_{\mathfrak{p}^e \parallel \mathfrak{n}} \left(\left(\frac{\alpha}{\mathfrak{p}}\right)_{2, K}\right)^e.
\]
Furthermore, for odd $\beta \in O_K$ we set
\[
\left(\frac{\alpha}{\beta}\right)_{2, K} := \left(\frac{\alpha}{(\beta)}\right)_{2, K}.
\]
We say that an element $\alpha \in K$ is totally positive if for all embeddings $\sigma$ of $K$ into $\mathbb{R}$ we have $\sigma(\alpha) > 0$. In particular, all elements of a totally complex number field are totally positive. We will make extensive use of the law of quadratic reciprocity.

\begin{theorem}
\label{t2R}
Let $\alpha, \beta \in O_K$ be odd. If $\alpha$ or $\beta$ is totally positive, we have
\[
\left(\frac{\alpha}{\beta}\right)_{2, K} = \mu(\alpha, \beta) \left(\frac{\beta}{\alpha}\right)_{2, K},
\]
where $\mu(\alpha, \beta) \in \{\pm 1\}$ depends only on the congruence classes of $\alpha$ and $\beta$ modulo $8$.
\end{theorem}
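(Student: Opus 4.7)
The plan is to derive the reciprocity law from the Hilbert product formula for the quadratic Hilbert symbol $(\cdot,\cdot)_v$ over $K$. I would first reduce to the coprime case: if some prime $\mathfrak{p}$ divides both $\alpha$ and $\beta$, then $\alpha\in\mathfrak{p}$ and $\beta\in\mathfrak{p}$, so both residue symbols vanish and the identity holds trivially with $\mu = 1$.

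With $\alpha,\beta$ coprime and odd, I would unpack the identity $\prod_v(\alpha,\beta)_v = 1$ place by place. For an odd prime $\mathfrak{p}\nmid\alpha\beta$, both $\alpha$ and $\beta$ are $\mathfrak{p}$-adic units and $(\alpha,\beta)_\mathfrak{p}=1$. For an odd prime $\mathfrak{p}\mid\beta$ (and hence $\mathfrak{p}\nmid\alpha$), the standard local computation yields $(\alpha,\beta)_\mathfrak{p} = \left(\frac{\alpha}{\mathfrak{p}}\right)_{2,K}^{v_\mathfrak{p}(\beta)}$, where the usual sign correction $(-1)^{v_\mathfrak{p}(\alpha)v_\mathfrak{p}(\beta)(\Norm\mathfrak{p}-1)/2}$ disappears because $v_\mathfrak{p}(\alpha)=0$. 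Multiplying over all such primes assembles $\left(\frac{\alpha}{\beta}\right)_{2,K}$, and symmetrically the odd primes dividing $\alpha$ produce $\left(\frac{\beta}{\alpha}\right)_{2,K}$. Hilbert reciprocity thus reads
\[
\left(\frac{\alpha}{\beta}\right)_{2,K}\left(\frac{\beta}{\alpha}\right)_{2,K}\cdot\prod_{\mathfrak{p}\mid 2}(\alpha,\beta)_\mathfrak{p}\cdot\prod_{v\text{ real}}(\alpha,\beta)_v = 1.
\]
At a real place, $(\alpha,\beta)_v=-1$ exactly when both embeddings are negative, so the hypothesis that $\alpha$ or $\beta$ is totally positive kills every archimedean factor; complex places contribute trivially. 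Rearranging, using $\left(\frac{\beta}{\alpha}\right)_{2,K}^2=1$, gives the claimed identity with $\mu(\alpha,\beta) := \prod_{\mathfrak{p}\mid 2}(\alpha,\beta)_\mathfrak{p}$.

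The main obstacle is verifying that $\mu(\alpha,\beta)$ depends only on the residue classes of $\alpha$ and $\beta$ modulo $8$. At each dyadic prime $\mathfrak{p}$ both $\alpha$ and $\beta$ are units in $O_{K_\mathfrak{p}}$, so the local symbol $(\alpha,\beta)_\mathfrak{p}$ depends only on their images in $K_\mathfrak{p}^{*}/(K_\mathfrak{p}^{*})^2$. I would invoke the standard dyadic local fact that, writing $e := v_\mathfrak{p}(2)$, every $1$-unit in $1+\mathfrak{p}^{2e+1}O_{K_\mathfrak{p}}$ is a square in $O_{K_\mathfrak{p}}^{*}$; since $v_\mathfrak{p}(8) = 3e \geq 2e+1$ for $e\geq 1$, any unit congruent to $1$ modulo $8$ is already a square locally. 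Hence each local Hilbert symbol at a dyadic prime, and therefore the finite product $\mu(\alpha,\beta)$, is determined by the congruence classes of $\alpha$ and $\beta$ modulo $8O_K$, completing the plan.
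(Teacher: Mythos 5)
Your proof is correct. The paper offers no argument of its own: it simply cites Lemma 2.1 of Friedlander--Iwaniec--Mazur--Rubin, and that lemma is itself established through the Hilbert product formula, so you have reproduced the substance of the argument behind the reference rather than taken a different route. The coprime reduction, the assembly of the power residue symbols from the tame local Hilbert symbols, the vanishing of the archimedean factors when one argument is totally positive, and the observation that at dyadic $\mathfrak{p}$ with $e = v_{\mathfrak{p}}(2)$ one has $1 + \mathfrak{p}^{2e+1} \subseteq (O_{K_\mathfrak{p}}^{*})^{2}$ and $v_{\mathfrak{p}}(8) = 3e \geq 2e+1$ are all exactly the right steps.
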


\begin{proof}
This follows from Lemma 2.1 of \cite{FIMR, FIMRE}.
\end{proof}

If $K = \Q$, we shall drop the subscript. In this case the symbol $\left(\frac{\cdot}{\cdot}\right)$ is to be interpreted as the Kronecker symbol, which is an extension of the quadratic residue symbol to allow for even arguments in the bottom. We presume that the reader is familiar with the quadratic reciprocity law for the Kronecker symbol. Now let $K$ be a number field containing $\Q(i)$ still with ring of integers $O_K$. For $\alpha \in O_K$ and $\mathfrak{p}$ an odd prime ideal of $O_K$, we define the quartic residue symbol $(\alpha/\mathfrak{p})_{4, K}$ to be the unique element in $\{\pm 1, \pm i, 0\}$ such that
\[
\left(\frac{\alpha}{\mathfrak{p}}\right)_{4, K} \equiv \alpha^{\frac{\Norm(\mathfrak{p}) - 1}{4}} \bmod \mathfrak{p}.
\]
We extend the quartic residue symbol to all odd ideals $\mathfrak{n}$ and then to all odd elements $\beta$ in the same way as the quadratic residue symbol. Then we have the following theorem.

\begin{theorem}
\label{t4R}
Let $\alpha, \beta \in \Z[\zeta_8]$ with $\beta$ odd. Then for fixed $\alpha$, the symbol $(\alpha/\beta)_{4, \Q(\zeta_8)}$ depends only on $\beta$ modulo $16\alpha\Z[\zeta_8]$. Furthermore, if $\alpha$ is also odd, we have
\[
\left(\frac{\alpha}{\beta}\right)_{4, \Q(\zeta_8)} = \mu(\alpha, \beta) \left(\frac{\beta}{\alpha}\right)_{4, \Q(\zeta_8)},
\]
where $\mu(\alpha, \beta) \in \{\pm 1, \pm i\}$ depends only on the congruence classes of $\alpha$ and $\beta$ modulo $16$.
\end{theorem}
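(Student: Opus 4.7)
The plan is to derive both assertions from class field theory applied to the Kummer extension $L := \Q(\zeta_8, \alpha^{1/4})/\Q(\zeta_8)$, which is cyclic of degree dividing $4$.

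For the first claim, it suffices to treat $\beta$ coprime to $\alpha$: the symbol $(\alpha/\beta)_{4,\Q(\zeta_8)}$ vanishes exactly when $\alpha$ and $\beta$ share a common prime divisor, a property invariant under $\beta \mapsto \beta + 16\alpha\gamma$. For coprime $\beta$ the symbol is multiplicative in $\beta$, and on a prime $\pp$ it coincides with the image of $\FFrob_\pp$ under the Kummer pairing $\Gal(L/\Q(\zeta_8)) \to \mu_4$ sending $\sigma$ to $\sigma(\alpha^{1/4})/\alpha^{1/4}$. By class field theory the Artin symbol depends only on $\pp$ modulo the conductor $\ff_{L/\Q(\zeta_8)}$, which is supported on primes dividing $4\alpha$; at the unique prime $\lambda$ above $2$ the standard bound for Kummer extensions of exponent dividing $4$ gives a local conductor exponent at most $v_\lambda(4^2) = 16$. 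Hence $\ff_{L/\Q(\zeta_8)} \mid (16\alpha)$, and $(\alpha/\beta)_{4,\Q(\zeta_8)}$ factors through the ray class group modulo $16\alpha$.

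For the second claim I would invoke Hilbert's general reciprocity $\prod_v (\alpha, \beta)_{v, 4} = 1$, where $v$ ranges over the places of $\Q(\zeta_8)$ and $(\cdot, \cdot)_{v, 4}$ is the local quartic Hilbert symbol. Archimedean contributions are trivial since $\Q(\zeta_8)$ is totally complex. For $\alpha, \beta$ odd and coprime, the tame local symbols at finite places away from $2$ combine via the tame symbol formula to exactly $(\alpha/\beta)_{4,\Q(\zeta_8)} \cdot (\beta/\alpha)_{4,\Q(\zeta_8)}^{-1}$. What remains is the local symbol $(\alpha, \beta)_{\lambda, 4}$ at the unique prime $\lambda \mid 2$, so $\mu(\alpha,\beta)$ equals, up to sign conventions, this local Hilbert symbol. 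A direct calculation in $\Q_2(\zeta_8)$ shows that the quartic Hilbert symbol on units is trivial on the higher unit group $1 + \lambda^{16}\OO_\lambda = 1 + 16\OO_\lambda$; since $2\Z[\zeta_8] = \lambda^4$ and therefore $16\Z[\zeta_8] = \lambda^{16}$, this yields dependence only on $\alpha, \beta$ modulo $16$.

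The principal obstacle is the explicit local analysis at the wild prime $\lambda \mid 2$: pinning down the exact conductor for the quartic Hilbert symbol and checking that $\lambda^{16}$ (rather than some higher power such as $\lambda^{17}$) already suffices requires delicate bookkeeping of higher unit groups in $\Q_2(\zeta_8)$. In practice I would quote the explicit formulas for quartic Hilbert symbols over $\Q_2(\zeta_8)$ from the reciprocity-law literature (e.g.\ Lemmermeyer's treatise) rather than redo the local computation from scratch.
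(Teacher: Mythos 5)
Your argument is correct, and it spells out the standard class-field-theoretic proof (Hilbert reciprocity plus a local conductor bound at $\lambda \mid 2$) that underlies the result; the paper itself simply cites Proposition 6.11 of Lemmermeyer's \emph{Reciprocity Laws} for both assertions. Your conductor bound of $\lambda^{16}$ is not sharp (one can get $\lambda^{13}$ from $(U^{(5)})^4 = U^{(13)}$ since $v_\lambda(2)=4$), but it suffices and the approach is the same as the one behind the cited reference.
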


\begin{proof}
Use Proposition 6.11 of Lemmermeyer \cite[p.\ 199]{Lemmermeyer}.
\end{proof}

\subsection{A fundamental domain}
Let $F$ be a number field of degree $n$ over $\Q$ and let $O_F$ be its ring of integers. Suppose that $F$ has $r$ real embeddings and $s$ pairs of conjugate complex embeddings so that $r + 2s = n$. Define $T$ to be the torsion subgroup of $O_F^\ast$. Then, by Dirichlet's Unit Theorem, there exists a free abelian group $V \subseteq O_F^\ast$ of rank $r + s - 1$ with $O_F^\ast = T \times V$. Fix one choice of such a $V$.

There is a natural action of $V$ on $O_F$. The goal of this subsection is to construct a fundamental domain $\mathcal{D}$ for this action. Such a fundamental domain allows us to transform a sum over ideals into a sum over elements. It will be important that the resulting fundamental domain has nice geometrical properties, so that we have good control over the elements we are summing.

Fix an integral basis $\omega_1, \ldots, \omega_n$ for $O_F$. We view $\omega_1, \ldots, \omega_n$ as an ordered list and write $\omega$ for this ordered list. Then we get an isomorphism of $\Q$-vector spaces $i_\omega: \mathbb{Q}^n \rightarrow F$, where $i_\omega$ is given by $(a_1, \ldots, a_n) \mapsto a_1\omega_1 + \ldots + a_n\omega_n$. For a subset $S \subseteq \mathbb{R}^n$ and an element $\alpha \in F$, we will say that $\alpha \in S$ if $i_{\omega}^{-1}(\alpha) \in S$. Define for our integral basis $\omega$ and a real number $X > 0$
\[
B(X, \omega) := \left\{(x_1, \ldots, x_n) \in \mathbb{R}^n : \left|\prod_{i = 1}^n \left(x_1 \sigma_i(\omega_1) + \ldots + x_n \sigma_i(\omega_n)\right)\right| \leq X\right\},
\]
where $\sigma_1, \ldots, \sigma_n$ are the embeddings of $F$ into $\mathbb{C}$.

\begin{lemma}
\label{lFD}
Let $F$ be a number field with ring of integers $O_F$ and integral basis $\omega = \{\omega_1, \ldots, \omega_n\}$. Choose a splitting $O_F^\ast = T \times V$, where $T$ is the torsion subgroup of $O_F^\ast$. There exists a subset $\mathcal{D} \subseteq \mathbb{R}^n$ such that
\begin{enumerate}
\item[(i)] for all $\alpha \in O_F \setminus \{0\}$, there exists a unique $v \in V$ such that $v \alpha \in \mathcal{D}$. Furthermore, we have the equality
\[
\{u \in O_F^\ast : u\alpha \in \mathcal{D}\} = \{tv : t \in T\};
\]
\item[(ii)] $\mathcal{D} \cap B(1, \omega)$ has an $(n - 1)$-Lipschitz parametrizable boundary;
\item[(iii)] there is a constant $C(\omega)$ depending only on $\omega$ such that for all $\alpha \in \mathcal{D}$ we have $|a_i| \leq C(\omega) \cdot |\Norm(\alpha)|^{\frac{1}{n}}$, where $a_i \in \Z$ are such that $\alpha = a_1 \omega_1 + \ldots + a_n \omega_n$.
\end{enumerate}
\end{lemma}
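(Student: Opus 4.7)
The plan is to construct $\DD$ via the standard logarithmic embedding. Order the embeddings so that $\sigma_1, \ldots, \sigma_r$ are real and $\sigma_{r+1}, \ldots, \sigma_{r+s}$ is a choice of one representative from each complex-conjugate pair, and define $L \colon (F \otimes_\Q \mathbb{R})^\ast \to \mathbb{R}^{r+s}$ by $L_i(\alpha) = \log|\sigma_i(\alpha)|$ for $i \leq r$ and $L_i(\alpha) = 2\log|\sigma_i(\alpha)|$ for $r < i \leq r+s$, so that $\sum_i L_i(\alpha) = \log|\Norm(\alpha)|$. By Dirichlet's unit theorem $L(V)$ is a full-rank lattice in the trace-zero hyperplane $H = \{x \in \mathbb{R}^{r+s} : \sum x_i = 0\}$; fix a fundamental parallelepiped $P \subseteq H$ for $L(V)$. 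Putting $f = (1, \ldots, 1, 2, \ldots, 2) \in \mathbb{R}^{r+s}$ (with $r$ ones and $s$ twos), one has $\sum f_i = n \neq 0$ and hence a direct sum decomposition $\mathbb{R}^{r+s} = H \oplus \mathbb{R} f$. I declare $\DD \subseteq \mathbb{R}^n$ to consist of all $\alpha$ (identified with a point of $F \otimes_\Q \mathbb{R}$ via $i_\omega$) having all $\sigma_i(\alpha) \neq 0$ for which the $H$-component of $L(\alpha)$ lies in $P$, together with the point $0$.

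Properties (i) and (iii) then follow quickly. For (i), given $\alpha \in O_F \setminus \{0\}$, write $L(\alpha) = h_0 + t_0 f$ with $h_0 \in H$; since translates of $P$ by $L(V)$ tile $H$, there is a unique $v \in V$ with $h_0 + L(v) \in P$, giving the unique $v \in V$ with $v\alpha \in \DD$. Because $L$ vanishes on $O_F^\ast$ precisely on $T$, the full stabilizer $\{u \in O_F^\ast : u\alpha \in \DD\}$ equals $Tv$. For (iii), if $\alpha \in \DD$ is nonzero, write $L(\alpha) = h + tf$ with $h \in P$ and $t = \log|\Norm(\alpha)|/n$; since $h$ is bounded, $|\sigma_j(\alpha)| \leq e^{\|h\|_\infty} |\Norm(\alpha)|^{1/n}$ for every $j$. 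The linear map $(a_1, \ldots, a_n) \mapsto (\sigma_1(\alpha), \ldots, \sigma_n(\alpha))$ has the fixed invertible matrix $(\sigma_j(\omega_k))_{j,k}$, so inverting yields $|a_i| \ll_\omega \max_j |\sigma_j(\alpha)| \ll |\Norm(\alpha)|^{1/n}$.

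The delicate point is (ii), and this is where I expect the main technical work. By (iii) the set $\DD \cap B(1, \omega)$ is contained in $[-C(\omega), C(\omega)]^n$, hence bounded. Its boundary splits into finitely many smooth pieces: one contribution from each of the $2(r+s-1)$ faces of $\partial P$ combined with each of the $2^r$ sign-orthants for the real embeddings, together with the norm hypersurface $|\Norm(\alpha)| = 1$ inherited from $\partial B(1, \omega)$. Each such piece is a compact smooth submanifold with corners of dimension $n-1$, and a standard partition-of-unity argument on such manifolds yields an $(n-1)$-Lipschitz parametrization piece by piece. The obstruction to carrying this out uniformly sits near the origin, where the exponential used to pass from $L$-coordinates back to $\mathbb{R}^n$ degenerates: the key observation is that the decomposition $L(\alpha) = h + tf$ with $h \in P$ forces $|\sigma_j(\alpha)| \asymp |\Norm(\alpha)|^{1/n}$ uniformly on $\DD$, so points of $\DD$ approach $0$ only through a radial cone in the $f$-direction with all coordinates contracting at comparable rates. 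A single radial rescaling therefore provides a Lipschitz patch in a small ball around $0$, completing the parametrization.
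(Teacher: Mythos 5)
Your construction via the logarithmic embedding is the standard one, and is the same approach as the cited Lemma 3.5 of \cite{KM2} (which in turn follows the fundamental-domain construction of \cite{FIMR, FIMRE}), so the route is not genuinely different. Parts (i) and (iii) are handled cleanly; two small points worth tightening are that the fundamental parallelepiped $P$ must be taken half-open so that the translates by $L(V)$ tile $H$ exactly (otherwise uniqueness of $v$ fails on $\partial P$), and in (iii) one should note that the matrix $(\sigma_j(\omega_k))$ is invertible because its determinant squares to the discriminant of $F$.

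The weak point is (ii), which you yourself flag. The description of the boundary is essentially right: it consists of the pullbacks of the faces of $\partial P$ under the $H$-projection of $L$ (in each of the $2^r$ real sign-orthants), together with the norm hypersurface, and one should additionally observe that $\DD$ stays uniformly away from the coordinate hyperplanes $\{\sigma_i = 0\}$ except at the origin (this follows by the same computation you use for (iii), since the $H$-component of $L(\alpha)$ would escape $P$). Your observation that $\DD$ is a cone under scaling by $\mathbb{R}^\times$ (because $L(\lambda\alpha) = L(\alpha) + (\log|\lambda|)f$, leaving the $H$-component unchanged) is the correct key to the origin: the lateral boundary is then the cone over its cross-section at, say, $|\Norm(\alpha)| = 1$, and if $\phi \colon [0,1]^{n-2} \to \mathbb{R}^n$ Lipschitz-parametrizes a piece of that compact cross-section, then $(r,t) \mapsto r\,\phi(t)$ gives a Lipschitz parametrization of the corresponding conical piece. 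You do not need a partition of unity; compactness of the cross-section plus the implicit function theorem give finitely many Lipschitz charts, and the radial scaling above handles the vertex. Writing this out carefully would close the gap, but the argument as sketched has all the right ingredients and would go through.
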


\begin{proof}
This is Lemma 3.5 of \cite{KM2}.
\end{proof}

We will use Lemma \ref{lFD} for $F := \Q(\zeta_8)$; in order to do so we must make some choices. We choose $V := \langle 1 + \sqrt{2} \rangle$ and integral basis $\omega := \{1, \zeta_8, \zeta_8^2, \zeta_8^3\}$. The resulting fundamental domain will be called $\mathcal{D}$, and we define $\mathcal{D}(X) := \mathcal{D} \cap B(X, \omega)$.

\section{The sieve}
\label{sSieve}
Let $\{a_p\}$ be a sequence of complex numbers indexed by the primes and define
\[
S(X) := \sum_{p \leq X} a_p.
\]
To prove our main theorem, we must prove oscillation of $S(X)$ for the specific sequence $\{e_p\}$ defined in equation (\ref{edep}). There are relatively few methods that can deal with such sums. The most common approach is to attach an $L$-function and then use the zero-free region. This approach requires that our sequence $\{e_p\}$ has good multiplicative properties. It turns out that $\{e_p\}$ is instead twisted multiplicative (see Lemma \ref{lMw1} and Lemma \ref{lMw2}), and this suggests we use Vinogradov's method instead.

Recall that $h(-p)$ denotes the class number of $\text{Cl}(\Q(\sqrt{-p}))$. By definition of $e_p$ we have $e_p = 0$ if and only if $8 \nmid h(-p)$. It is well-known that $\Q(\zeta_8, \sqrt{1 + i})$ is a \emph{governing field} for the $8$-rank of $\text{Cl}(\Q(\sqrt{-p}))$, in fact a prime $p$ splits completely in $\Q(\zeta_8, \sqrt{1 + i})$ if and only if $8 \mid h(-p)$. This is extremely convenient. Indeed, if we apply Vinogradov's method to our governing field, primes of degree $1$ will give the dominant contribution and these primes automatically have $e_p \neq 0$.

Unfortunately, $\Q(\zeta_8, \sqrt{1 + i})$ is a field of degree $8$, which is simply too large to make our analytic methods work unconditionally. Indeed, using the same approach for the sums of type I as \cite{FIMR, FIMRE}, one ends up with short character sums of modulus $q$ and length roughly $q^{\frac{1}{8}}$, which is far outside the reach of the Burgess bound. However, assuming a short character sum conjecture, one still obtains the desired oscillation and this is the approach taken in \cite{KM1}. Instead we work over $\Q(\zeta_8)$; fortunately, $\Q(\zeta_8, \sqrt{1 + i})$ is an abelian extension of $\Q(\zeta_8)$, which implies that the splitting of a prime $\mathfrak{p}$ of $\Q(\zeta_8)$ in the extension $\Q(\zeta_8, \sqrt{1 + i})/\Q(\zeta_8)$ is determined by a congruence condition. Such a congruence condition can easily be incorporated in Vinogradov's method.

We will follow Section 5 of Friedlander et al. \cite{FIMR, FIMRE}, who adapted Vinogradov's method to number fields. Let $F$ be a number field. Define for a non-zero ideal $\mathfrak{n}$ of $O_F$
\[
\Lambda(\mathfrak{n}) := 
\left\{
	\begin{array}{ll}
		\log \Norm\mathfrak{p} & \mbox{if } \mathfrak{n} = \mathfrak{p}^l \\
		0 & \mbox{otherwise} 
	\end{array}
\right.
\]
and suppose that we want to prove oscillation of
\[
S(X) := \sum_{\Norm\mathfrak{n} \leq X} a_{\mathfrak{n}} \Lambda(\mathfrak{n}),
\]
where $a_{\mathfrak{n}}$ is of absolute value at most $1$. The power of Vinogradov's method lies in the fact that one does not have to deal with $S(X)$ directly. Instead one has to prove cancellations of
\[
A(X, \mathfrak{d}) := \sum_{\substack{\Norm{\mathfrak{n}} \leq X \\ \mathfrak{d} \mid \mathfrak{n}}} a_{\mathfrak{n}},
\]
which are traditionally called sums of type I or linear sums, and
\[
B(M, N) := \sum_{\Norm{\mathfrak{m}} \leq M} \sum_{\Norm{\mathfrak{n}} \leq N} \alpha_{\mathfrak{m}} \beta_{\mathfrak{n}} a_{\mathfrak{m}\mathfrak{n}},
\]
which are traditionally called sums of type II or bilinear sums. It is important to remark that $S(X)$ depends only on $a_{\mathfrak{n}}$ with $\mathfrak{n}$ a prime power, while $A(X, \mathfrak{d})$ and $B(M, N)$ certainly do not. This gives a substantial amount of flexibility, since we may define $a_{\mathfrak{n}}$ on composite ideals $\mathfrak{n}$ in any way we like provided that we can prove oscillation of $A(X, \mathfrak{d})$ and $B(M, N)$. Constructing a suitable sequence $a_{\mathfrak{n}}$ will be the goal of Section \ref{sDef}. We are now ready to state the precise version of Vinogradov's method we are going to use.

\begin{prop}
\label{pVin}
Let $F$ be a number field and let $a_{\mathfrak{n}}$ be a sequence of complex numbers, indexed by the ideals of $O_F$, with $|a_{\mathfrak{n}}| \leq 1$. If $0 < \theta_1, \theta_2 < 1$ and $\theta_3 > 0$ are such that
\begin{itemize}
\item we have for all ideals $\mathfrak{d}$ of $O_F$
\begin{align}
\label{eSumI}
A(X, \mathfrak{d}) \ll_{F, a_{\mathfrak{n}}, \theta_1} \frac{X}{\exp\left(\left(\log X\right)^{\theta_1}\right)};
\end{align}
\item we have for all sequences of complex numbers $\{\alpha_{\mathfrak{m}}\}$ and $\{\beta_{\mathfrak{n}}\}$ of absolute value at most $1$
\begin{align}
\label{eSumII}
B(M, N) \ll_{F, a_{\mathfrak{n}}, \theta_2} (M + N)^{\theta_2} (MN)^{1 - \theta_2} (\log MN)^{\theta_3}.
\end{align}
\end{itemize}
Then we have for all $c < \theta_1$
\[
S(X) \ll_{c, F, a_{\mathfrak{n}}, \theta_1, \theta_2, \theta_3} \frac{X}{\exp\left(\left(\log X\right)^c\right)}.
\]
\end{prop}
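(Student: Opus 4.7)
The plan is to follow the standard Vinogradov strategy in the ideal-theoretic form of \cite[Section 5]{FIMR, FIMRE}. Fix parameters $U, V \geq 1$ to be chosen below. Vaughan's identity, which transfers verbatim from $\Z$ to $O_F$ since $\Lambda$ depends only on the multiplicative structure of ideals, decomposes $\Lambda(\mathfrak{n})$ (on ideals with $\Norm\mathfrak{n} > U$) as the sum of two Type I convolutions, each involving one factor of norm at most $UV$, and one Type II convolution supported on factorizations $\mathfrak{n} = \mathfrak{d}\mathfrak{e}$ with $\Norm\mathfrak{d} > V$ and $\Norm\mathfrak{e} > U$. The contribution to $S(X)$ from the ideals with $\Norm\mathfrak{n} \leq U$ is trivially $O(U \log X)$.

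After substituting into $S(X)$ and interchanging orders of summation, each Type I piece collapses to an expression of the form $\sum_{\Norm\mathfrak{d} \leq UV} c(\mathfrak{d}) A(Y_\mathfrak{d}, \mathfrak{d})$ with $Y_\mathfrak{d} \leq X$ and $|c(\mathfrak{d})| \leq d(\mathfrak{d}) \log X$; the extra weight $\log \Norm\mathfrak{e}$ produced by Vaughan's identity is absorbed by Abel summation. Applying (\ref{eSumI}) termwise together with the divisor sum estimate $\sum_{\Norm\mathfrak{d} \leq Z} d(\mathfrak{d}) \ll Z (\log Z)^{O(1)}$, the Type I contribution is bounded by $UV (\log X)^{O(1)} X / \exp((\log X)^{\theta_1})$. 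For the Type II piece, a dyadic decomposition in $\Norm\mathfrak{d}$ and $\Norm\mathfrak{e}$ produces $O((\log X)^2)$ bilinear forms $B(M, N)$ with $V \leq M \leq X/U$, $U \leq N \leq X/V$, and $MN \leq X$; after normalizing the $\Lambda_{>U}$ weight (which costs a factor $\log X$), hypothesis (\ref{eSumII}) gives a total Type II contribution bounded by
\[
(X/U + X/V)^{\theta_2} X^{1 - \theta_2} (\log X)^{\theta_3 + O(1)}.
\]

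To finish, I would set $U = V = \exp((\log X)^c / \theta_2)$. Since $c < \theta_1$, the Type I bound becomes
\[
\exp\!\left(\tfrac{2}{\theta_2}(\log X)^c\right) \cdot \frac{X (\log X)^{O(1)}}{\exp((\log X)^{\theta_1})} \ll \frac{X}{\exp((\log X)^c)},
\]
while the Type II bound collapses to $X V^{-\theta_2} (\log X)^{O(1)} = X \exp(-(\log X)^c) (\log X)^{O(1)}$, which is of the required order. Summing these two estimates together with the trivial $O(U\log X)$ piece proves the claim.

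There is no essential mathematical obstacle: the proof is a routine transcription of the classical Vinogradov argument to ideals in $O_F$. The only care required is to absorb every logarithmic loss coming from dyadic summation, Abel summation, and the divisor bound into a single $(\log X)^{O(1)}$ factor; this is harmless precisely because the statement asks only for an arbitrary $c < \theta_1$ rather than the endpoint $c = \theta_1$, so we always have enough room in the exponent $(\log X)^{\theta_1} - (\log X)^c \to \infty$ to swallow the losses.
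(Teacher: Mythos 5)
Your proposal re-derives from scratch the ideal-theoretic Vaughan/Vinogradov decomposition that the paper simply cites: the paper's entire proof is a one-line appeal to Proposition~5.1 of Friedlander--Iwaniec--Mazur--Rubin \cite{FIMR, FIMRE} with the single parameter $y := \exp\bigl((\log X)^{(c+\theta_1)/2}\bigr)$, which plays the role of your $U = V$. Your route is therefore more self-contained but re-proves known material; the substance of both arguments is the same Vaughan-identity-plus-dyadic-decomposition machinery, and your reduction of the Type~I piece to $A(\cdot,\mathfrak{d})$ via divisor sums and Abel summation, and of the Type~II piece to dyadic bilinear forms, matches what that proposition encapsulates.

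One slip worth fixing: with your explicit choice $U = V = \exp\bigl((\log X)^c/\theta_2\bigr)$, the Type~II contribution comes out as $X\exp\bigl(-(\log X)^c\bigr)(\log X)^{O(1)}$, which is \emph{not} $\ll X/\exp\bigl((\log X)^c\bigr)$ because of the polylogarithmic factor. Your closing paragraph correctly identifies the cure --- exploit the room between $c$ and $\theta_1$ --- but your parameter choice does not actually use it. Replacing $(\log X)^c$ by $(\log X)^{c'}$ in the exponent of $U,V$ for any intermediate $c < c' < \theta_1$, for instance $c' = (c+\theta_1)/2$ (precisely the exponent the paper feeds into FIMR's Proposition~5.1), repairs this: the Type~II bound becomes $X\exp\bigl(-(\log X)^{c'}\bigr)(\log X)^{O(1)} \ll X/\exp\bigl((\log X)^c\bigr)$ since $(\log X)^{c'}-(\log X)^c$ dominates $\log\log X$, while the Type~I bound still wins because $c' < \theta_1$ forces $(\log X)^{c'} = o\bigl((\log X)^{\theta_1}\bigr)$.
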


\begin{proof}
This quickly follows from Proposition 5.1 of \cite{FIMR, FIMRE} with $y := \exp\left(\left(\log X\right)^{\frac{c + \theta_1}{2}}\right)$.
\end{proof}

The remainder of this paper is devoted to the three major tasks that are left. We start by constructing a suitable sequence $a_{\mathfrak{n}}$ in Section \ref{sDef} to which we will apply Proposition \ref{pVin} with $F = \Q(\zeta_8)$. The main result of Section \ref{sSumI} is Proposition \ref{pSumI}, which proves equation (\ref{eSumI}) for $\theta_1 = 0.2$. Finally, we prove in Section \ref{sSumII} that (\ref{eSumII}) holds with $\theta_2 = \frac{1}{24}$; this is the content of Proposition \ref{pSumII}. Once we have proven Proposition \ref{pSumI} and Proposition \ref{pSumII}, the proof of Theorem \ref{tCancel} is complete.

\section{Definition of the sequence}
\label{sDef}
By Gauss genus theory we know that the $2$-part of $\text{Cl}(\Q(\sqrt{-p}))$ is cyclic, and the $2$-part of $\text{Cl}(\Q(\sqrt{-p}))$ is trivial if and only if $p \equiv 3 \bmod 4$. Let us recall a criterion for $16 \mid h(-p)$ due to Leonard and Williams \cite{LW82}. We have
\[
4 \mid h(-p) \Longleftrightarrow p \equiv 1 \bmod 8.
\]
Now suppose that $4 \mid h(-p)$. There exist positive integers $g$ and $h$ satisfying
\[
p = 2g^2 - h^2.
\]
Then a classical result of Hasse \cite{Hasse} is
\[
8 \mid h(-p) \Longleftrightarrow \left(\frac{g}{p}\right) = 1 \text{ and } p \equiv 1 \bmod 8
\]
or equivalently
\[
8 \mid h(-p) \Longleftrightarrow \left(\frac{-1}{g}\right) = 1 \text{ and } p \equiv 1 \bmod 8.
\]
We are now ready to state the result of Leonard and Williams \cite{LW82}. If $p$ is a prime number with $8 \mid h(-p)$, we have
\[
16 \mid h(-p) \Longleftrightarrow \left(\frac{g}{p}\right)_4 \left(\frac{2h}{g}\right) = 1.
\]
With this in mind, we are going to define a sequence $\{a_\mathfrak{n}\}$, indexed by the integral ideals of $\Z[\zeta_8]$, such that for all unramified prime ideals $\mathfrak{p}$ in $\Z[\zeta_8]$ of norm $p$
\begin{align}
\label{eap}
a_\mathfrak{p} = 
\left\{
	\begin{array}{ll}
		1  & \mbox{if } 16 \mid h(-p) \\
		-1 & \mbox{if } 8 \mid h(-p), 16 \nmid h(-p) \\
		0 & \mbox{otherwise.}
	\end{array}
\right.
\end{align}
The sequence $\{a_\mathfrak{n}\}$ will be constructed in such a way that we can prove the two estimates in Proposition \ref{pSumI} and Proposition \ref{pSumII}. Before we move on, it will be useful to recall some standard facts about $\Z[\zeta_8]$. The ring $\Z[\zeta_8]$ is a PID with unit group generated by $\zeta_8$ and $\epsilon := 1 + \sqrt{2}$. Odd primes are unramified in $\Z[\zeta_8]$, while $2$ is totally ramified. Furthermore, an odd prime $p$ splits completely in $\Z[\zeta_8]$ if and only if $p \equiv 1 \bmod 8$ if and only if $4 \mid h(-p)$. We will make extensive use of the following field diagram.

\begin{center}
\begin{tikzpicture}
  \draw (0, 0) node[]{$\Q$};
  \draw (0, 2) node[]{$\Q(i\sqrt{2})$};
	\draw (-2, 2) node[]{$\Q(i)$};
	\draw (2, 2) node[]{$\Q(\sqrt{2})$};
  \draw (0, 4) node[]{$M := \Q(\zeta_8)$};
	\draw (0, 0.3) -- (0, 1.7);
	\draw (0, 2.3) -- (0, 3.7);
	\draw (0.3, 0.3) -- (1.7, 1.7);
	\draw (-0.3, 0.3) -- (-1.7, 1.7);
	\draw (1.7, 2.3) -- (0.3, 3.7);
	\draw (-1.7, 2.3) -- (-0.3, 3.7);
	\draw (1.3, 3.2) node[]{$\left\langle \tau\right\rangle$};
	\draw (0.4, 2.8) node[]{$\left\langle \sigma\tau \right\rangle$};
	\draw (-1.3, 3.2) node[]{$\left\langle \sigma\right\rangle$};
\end{tikzpicture}
\end{center} 

\noindent If $\mathfrak{n}$ is not odd, we set $a_\mathfrak{n} := 0$. From now on $\mathfrak{n}$ is an odd, integral, non-zero ideal of $\Z[\zeta_8]$ and $w$ is a generator of $\mathfrak{n}$. We can write $w$ as
\[
w = a + b\zeta_8 + c\zeta_8^2 + d\zeta_8^3
\]
for certain $a, b, c, d \in \Z$. Define $u, v \in \Z$ by
\[
w \tau(w) = u + v\sqrt{2}.
\]
We can explicitly compute $u$ and $v$ using the following formulas
\begin{align}
\label{eFormulau}
u = \frac{w\tau(w) + \sigma(w)\sigma\tau(w)}{2} = a^2 + b^2 + c^2 + d^2
\end{align}
and
\begin{align}
\label{eFormulav}
v = \frac{w\tau(w) - \sigma(w)\sigma\tau(w)}{2\sqrt{2}} = ab - ad + bc + cd.
\end{align}
Since $w$ is odd, it follows that $\Norm w \equiv 1 \bmod 8$. Then it follows from
\[
\Norm w = u^2 - 2v^2
\]
that $u$ is an odd integer and $v$ is an even integer. Set 
\[
g := u + v, \quad h := u + 2v,
\]
so that $g$ is an odd integer and $h$ is an odd integer, not necessarily positive. We claim that $g$ is positive. Indeed
\begin{align*}
g &= a^2 + b^2 + c^2 + d^2 + ab - ad + bc + cd \\
&= \frac{1}{2}(a + b)^2 + \frac{1}{2}(a - d)^2 + \frac{1}{2}(b + c)^2 + \frac{1}{2}(c + d)^2 > 0.
\end{align*}
By construction $g$ and $h$ satisfy
\[
\Norm w = 2g^2 - h^2.
\]
We start by showing that the value of
\begin{align}
\label{e8p}
\left(\frac{-1}{g}\right)
\end{align}
does not depend on the choice of generator $w$ of our ideal $\mathfrak{n}$.

\begin{lemma}
\label{l8p}
Let $\mathfrak{n}$ be an odd, integral ideal of $\Z[\zeta_8]$. Then the value of equation (\ref{e8p}) is the same for all generators $w$ of $\mathfrak{n}$.
\end{lemma}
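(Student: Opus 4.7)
My plan is to reduce the claim to showing $g \equiv g' \pmod{4}$ for any two generators $w, w'$ of $\mathfrak{n}$; this suffices because $\left(\frac{-1}{g}\right)$ depends only on $g \bmod 4$. Since two generators differ by a unit and $\Z[\zeta_8]^\ast = \langle \zeta_8 \rangle \times \langle \epsilon \rangle$, I would verify invariance under the two generators of the unit group separately.

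The torsion factor is trivial: $\tau$ is complex conjugation, so $\tau(\zeta_8) = \zeta_8^{-1}$ and $(\zeta_8 w)\tau(\zeta_8 w) = w\tau(w)$; hence $u$, $v$ and $g$ are all unchanged under $w \mapsto \zeta_8 w$.

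For the fundamental unit, observe that $\tau$ fixes the real subfield $\Q(\sqrt{2})$, so $\tau(\epsilon) = \epsilon$, and therefore replacing $w$ by $\epsilon^b w$ multiplies $u + v\sqrt{2}$ by $\epsilon^{2b} \in \Z[\sqrt{2}]$. The key computational ingredient is the identity $\epsilon^4 = 17 + 12\sqrt{2} \equiv 1 \pmod{4\Z[\sqrt{2}]}$, which forces $\epsilon^{2b}$ to be congruent modulo $4$ to either $1$ (if $b$ is even) or $\epsilon^2 = 3 + 2\sqrt{2}$ (if $b$ is odd). In the first case $(u', v') \equiv (u, v) \pmod{4}$, and there is nothing to do. In the second case, expanding $(3 + 2\sqrt{2})(u + v\sqrt{2})$ gives $u' + v' \equiv 5u + 7v \equiv u - v \pmod{4}$, so $g' - g \equiv -2v \pmod{4}$.

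The conclusion then hinges on the parity of $v$, which has already been extracted in the text: since $\Norm(w) \equiv 1 \pmod{8}$ and $u$ is odd, the identity $\Norm(w) = u^2 - 2v^2$ forces $v$ to be even, making $-2v \equiv 0 \pmod{4}$. I do not expect any genuine obstacle in carrying out this plan; the one piece of bookkeeping that requires attention is to use the correct action of $\tau$, namely $\tau(\epsilon) = \epsilon$ rather than the $\Q(\sqrt{2})/\Q$-conjugate $1 - \sqrt{2}$, which stems from the fact that $\tau$ generates $\Gal(M/\Q(\sqrt{2}))$ in the field diagram.
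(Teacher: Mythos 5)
Your proposal is correct and follows essentially the same route as the paper: check invariance under $\zeta_8$ (trivial, since $\zeta_8\tau(\zeta_8)=1$) and under $\epsilon$ (where $u+v\sqrt{2}$ is multiplied by $\epsilon^2 = 3 + 2\sqrt{2}$, giving $g' = 5u+7v$), and then reduce to the observation that $v$ is even so $g' \equiv g \bmod 4$. The only cosmetic difference is that you handle $\epsilon^b$ in one go via $\epsilon^4 \equiv 1 \bmod 4\Z[\sqrt{2}]$, whereas the paper verifies invariance for $\epsilon$ alone and lets it propagate to all powers.
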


\begin{proof}
Suppose that we replace $w$ by $\zeta_8 w$. Because $\zeta_8 \tau(\zeta_8) = 1$, it follows that $u$ and $v$, hence also $g$, do not change. Suppose instead that we replace $w$ by $\epsilon w$. In this case $u$ becomes $3u + 4v$ and $v$ becomes $2u + 3v$, so $g$ becomes $5u + 7v$. Hence our lemma boils down to
\[
\left(\frac{-1}{u + v}\right) = \left(\frac{-1}{5u + 7v}\right),
\]
which holds if and only if
\[
u + v \equiv 5u + 7v \bmod 4.
\]
But recall that $v$ is even by our assumption that $w$ is odd.
\end{proof}

We define for odd $w \in \Z[\zeta_8]$ the following symbol
\[
[w] := \left(\frac{g}{w}\right)_{4, M} \left(\frac{2h}{g}\right),
\]
where we remind the reader that $M$ is defined to be $\Q(\zeta_8)$. We express this as
\begin{align}
\label{ewsplit}
[w] = [w]_1[w]_2, \quad [w]_1 := \left(\frac{g}{w}\right)_{4, M}, \quad [w]_2 := \left(\frac{2h}{g}\right).
\end{align}
It is easily checked that $[\zeta_8 w] = [w]$. Unfortunately, it is not always true that $[\epsilon w] = [w]$. To get around this, we need the following lemma.

\begin{lemma}
\label{lUnitAction}
We have for all odd $w$
\[
[\epsilon^4 w] = [w].
\]
\end{lemma}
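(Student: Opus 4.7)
The plan is to split via (\ref{ewsplit}) as $[\epsilon^4 w] = [\epsilon^4 w]_1 \, [\epsilon^4 w]_2$ and verify that each factor is unchanged. Since $\tau(\epsilon) = \epsilon$, replacing $w$ by $\epsilon^4 w$ replaces $w\tau(w) = u + v\sqrt{2}$ by $\epsilon^8(u + v\sqrt{2})$; expanding $\epsilon^8 = 577 + 408\sqrt{2}$ yields $u_4 = 577u + 816v$, $v_4 = 408u + 577v$, and therefore
\[
g_4 = 985g + 408v, \qquad h_4 = 577h + 816g.
\]
Reducing modulo $16$ one finds $g_4 \equiv g + 8 \pmod{16}$ and $h_4 \equiv h \pmod{16}$; in particular $g_4 \equiv g \pmod{8}$, which I will use repeatedly.

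For the quartic part I would work modulo $w$ in $\Z[\zeta_8]$. From $w \mid u + v\sqrt{2}$ we obtain $u \equiv -v\sqrt{2} \pmod{w}$, hence $g \equiv v(1 - \sqrt{2}) = -v\epsilon^{-1} \pmod{w}$. The same reduction applied to $v_k = b_k u + a_k v$, where $\epsilon^{2k} = a_k + b_k\sqrt{2}$, gives $v_k \equiv v(a_k - b_k\sqrt{2}) = v\,\epsilon^{-2k} \pmod{w}$, using that the nontrivial element of $\Gal(\Q(\sqrt{2})/\Q)$ sends $\epsilon$ to $-\epsilon^{-1}$. Combining these, $g_4/g \equiv v_4/v \equiv \epsilon^{-8} \pmod w$, and so
\[
\frac{[\epsilon^4 w]_1}{[w]_1} = \left(\frac{\epsilon^{-8}}{w}\right)_{4,M} = \left(\frac{\epsilon}{w}\right)_{4,M}^{-8} = 1,
\]
since the quartic residue symbol takes values in the fourth roots of unity. (When $w \mid v$, both $g$ and $g_4$ are divisible by $w$ and both symbols vanish.)

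For the quadratic part, the identities $h = g + v$ and $h_4 = g_4 + v_4$ reduce the claim to $\left(\frac{2v_4}{g_4}\right) = \left(\frac{2v}{g}\right)$, and the factors $\left(\frac{2}{\cdot}\right)$ cancel thanks to $g_4 \equiv g \pmod 8$. The transition matrix $\left(\begin{smallmatrix}985 & 408 \\ 408 & 169\end{smallmatrix}\right)$ sending $(g,v)$ to $(g_4, v_4)$ has determinant $985 \cdot 169 - 408^2 = 1$ (which is $N_{\Q(\sqrt{2})/\Q}(\epsilon^8) = 1$ in disguise), so
\[
g_4 \equiv 408v \pmod{g}, \qquad g \equiv -408 v_4 \pmod{g_4}.
\]
Feeding these into Jacobi symbols yields $\left(\frac{g_4}{g}\right) = \left(\frac{408}{g}\right)\left(\frac{v}{g}\right)$ and $\left(\frac{g}{g_4}\right) = \left(\frac{-408}{g_4}\right)\left(\frac{v_4}{g_4}\right)$, and equating $\left(\frac{v_4}{g_4}\right) = \left(\frac{v}{g}\right)$ rearranges to
\[
\left(\frac{g}{g_4}\right)\left(\frac{g_4}{g}\right) = \left(\frac{-1}{g_4}\right)\left(\frac{408}{g}\right)\left(\frac{408}{g_4}\right).
\]
The left-hand side equals $(-1)^{(g-1)(g_4-1)/4}$ by Jacobi reciprocity. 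Splitting $408 = 2^3 \cdot 3 \cdot 17$, the congruences $g_4 \equiv g \pmod 8$, $g_4 \equiv g \pmod 3$, $g_4 \equiv -g \pmod{17}$ (read off from $577, 408$ modulo $8, 3, 17$), together with $17 \equiv 1 \pmod 4$ and $\left(\frac{-1}{17}\right) = 1$, force $\left(\frac{408}{g}\right) = \left(\frac{408}{g_4}\right)$, so the right-hand side collapses to $(-1)^{(g_4-1)/2}$. The remaining identity $(-1)^{(g-1)(g_4-1)/4} = (-1)^{(g_4-1)/2}$ is a short parity check on $g \bmod 8$.

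The main obstacle is the bookkeeping in the quadratic part: tracking Jacobi-reciprocity signs through the flips and handling the edge cases where $\gcd(g, g_4)$ is not trivial by invoking multiplicativity of the Kronecker symbol. The heavy lifting is done by the unimodularity of the $2 \times 2$ transition matrix above and by the elementary observation that eighth powers of fourth roots of unity are trivial.
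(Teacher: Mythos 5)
Your strategy---split $[w] = [w]_1[w]_2$ via (\ref{ewsplit}) and verify invariance of each factor separately---matches the paper's, and your handling of the quartic part is a correct alternative to the paper's. The paper uses the rewrite $[w]_1 = \bigl(\tfrac{(\frac12 - \frac{1}{2\sqrt2})\sigma(w)\sigma\tau(w)}{w}\bigr)_{4,M}$ from equation (\ref{ew1}) together with $\sigma(\epsilon^2)\sigma\tau(\epsilon^2) = \epsilon^{-4}$ to deduce the stronger statement $[\epsilon^2 w]_1 = [w]_1$, whereas you compute $g_4/g \equiv \epsilon^{-8} \pmod{w}$ directly and use that an eighth power of a fourth root of unity is trivial; both are sound, and both exploit that the underlying ideal $(w)$ is unchanged.

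The quadratic part is where your argument has a genuine gap. Your key identity
\[
\left(\frac{g}{g_4}\right)\left(\frac{g_4}{g}\right) = \left(\frac{-1}{g_4}\right)\left(\frac{408}{g}\right)\left(\frac{408}{g_4}\right),
\]
and the subsequent appeal to Jacobi reciprocity for the left-hand side, both require $\gcd(g, g_4) = 1$. But since $g_4 = 985g + 408v$ with $985 \equiv 1 \pmod 3$, $985 \equiv -1 \pmod{17}$, and $408 \equiv 0 \pmod{51}$, the divisibility $3 \mid g$ forces $3 \mid g_4$ and $17 \mid g$ forces $17 \mid g_4$---and this can happen while $\gcd(v,g) = 1$, so that $\bigl(\tfrac{v}{g}\bigr)$ and $\bigl(\tfrac{v_4}{g_4}\bigr)$ are both nonzero and there really is something to prove. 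In that regime $\bigl(\tfrac{g_4}{g}\bigr) = \bigl(\tfrac{g}{g_4}\bigr) = 0$ and your chain of equalities collapses to $0=0$, giving no information. Worse, the $3$- and $17$-adic valuations of $g$ and $g_4$ need not agree (one easily constructs $v$ with $3 \,\|\, g$ but $9 \mid g_4$), so the deferred ``multiplicativity of the Kronecker symbol'' fix is not automatic; you would have to peel off the $3$- and $17$-parts of $g$ and $g_4$ and treat them separately, which is exactly the sort of case analysis the argument was trying to avoid. The paper sidesteps all of this: after writing $[w]_2$ via equation (\ref{ew2uv}), it reduces the claim to $\bigl(\tfrac{v}{u}\bigr) = \bigl(\tfrac{v_1}{u_1}\bigr)$ and quotes Proposition~2 of Milovic \cite{Milovic2}, which is tailored precisely to such unimodular substitutions inside a Jacobi symbol and handles the degenerate cases uniformly. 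You should either carry out the coprime-to-$51$ reduction carefully or, more efficiently, invoke Milovic's proposition as the paper does.
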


\begin{proof}
We have for any odd $w$
\begin{align}
\label{ew1}
[w]_1 = \left(\frac{g}{w}\right)_{4, M} = \left(\frac{u + v}{w}\right)_{4, M} = \left(\frac{\left(\frac{1}{2} - \frac{1}{2\sqrt{2}}\right)\sigma(w)\sigma\tau(w)}{w}\right)_{4, M},
\end{align}
where we use the explicit formulas for $u$ and $v$, see equation (\ref{eFormulau}) and equation (\ref{eFormulav}), in terms of $w$. From this expression it quickly follows that $[\epsilon^2 w]_1 = [w]_1$. We also have
\begin{align}
\label{ew2uv}
[w]_2 &= \left(\frac{2h}{g}\right) = \left(\frac{2u + 4v}{u + v}\right) = \left(\frac{2}{u + v}\right) \left(\frac{v}{u + v}\right) \nonumber \\
&= \left(\frac{2}{u + v}\right) \left(\frac{-u}{u + v}\right) = \left(\frac{-2}{u + v}\right) \left(\frac{v}{u}\right) (-1)^{\frac{u - 1}{2} \cdot \frac{u + v - 1}{2}}.
\end{align}
A straightforward computation shows that the $u$ and $v$ associated to $\epsilon^4 w$ are respectively $u_1 := 577u + 816v$ and $v_1 := 408u + 577v$. Then we have
\begin{align}
\label{evu1}
\left(\frac{v}{u}\right) = \left(\frac{408u + 577v}{577u + 816v}\right) = \left(\frac{v_1}{u_1}\right)
\end{align}
due to Proposition 2 in Milovic \cite{Milovic2}. It will be useful to observe that the following congruences hold true
\[
u \equiv u_1 \bmod 8, \quad v \equiv v_1 \bmod 8.
\]
This immediately implies
\begin{align}
\label{evu2}
\left(\frac{-2}{u + v}\right) = \left(\frac{-2}{u_1 + v_1}\right),
\end{align}
and therefore the lemma.
% Finally, we claim
% \begin{align}
% \label{evu3}
% (-1)^{\frac{v' - 1}{2} \cdot \frac{u + v - 1}{2} + \frac{v' - 1}{2} \cdot \frac{u - 1}{2}} = (-1)^{\frac{v_1' - 1}{2} \cdot \frac{u_1 + v_1 - 1}{2} + \frac{v_1' - 1}{2} \cdot \frac{u_1 - 1}{2}},
% \end{align}
% which is equivalent to $(-1)^{\frac{v' - 1}{2} \cdot \left(u - 1 + \frac{v}{2}\right)} = (-1)^{\frac{v_1' - 1}{2} \cdot \left(u_1 - 1 + \frac{v_1}{2}\right)}$. We distinguish two cases. If $4 \mid v$, we have
% \[
% u - 1 + \frac{v}{2} \equiv 0 \bmod 2,
% \]
% and hence both sides are equal to $1$. If instead $4 \nmid v$, then $v$ is equal to $2v'$ with $v'$ odd and $v_1$ is equal to $2v_1'$ with $v_1'$ odd. Furthermore $v \equiv v_1 \bmod 8$ implies $v' \equiv v_1' \bmod 4$. We conclude that (\ref{evu3}) holds. From (\ref{evu1}), (\ref{evu2}) and (\ref{evu3}) we deduce that $[w]_2 = [\epsilon^4w]_2$, which ends the proof of our lemma.
\end{proof}

With this out of the way, we have all the tools necessary to define $a_{\mathfrak{n}}$. Suppose that $\mathfrak{n}$ is an odd, integral ideal of $\Z[\zeta_8]$ with generator $w$. Then we define
\begin{align}
\label{ean}
a_{\mathfrak{n}} := 
\left\{
	\begin{array}{ll}
		\frac{1}{4}\left([w] + [\epsilon w] + [\epsilon^2 w] + [\epsilon^3 w]\right)  & \mbox{if } w \text{ satisfies } (\ref{e8p}) \\
		0 & \mbox{otherwise.}
	\end{array}
\right.
\end{align}
for any generator $w$ of $\mathfrak{n}$. Here we say that $w$ satisfies equation (\ref{e8p}) if $(-1/g) = 1$, where $g$ is defined in terms of $w$ as above. Then an application of Lemma \ref{l8p} and Lemma \ref{lUnitAction} shows that (\ref{ean}) is indeed well-defined.

\begin{lemma}
\label{lanagrees}
The sequence $a_{\mathfrak{n}}$ satisfies equation (\ref{eap}) for all unramified prime ideals $\mathfrak{p}$ of degree $1$ in $\Z[\zeta_8]$.
\end{lemma}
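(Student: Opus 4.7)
The plan is to verify directly that for a degree-one prime ideal $\mathfrak{p}$ of $\Z[\zeta_8]$ above $p$, the value $a_{\mathfrak{p}}$ produced by (\ref{ean}) agrees with the sign (\ref{eap}) prescribed by the Leonard--Williams criterion. Degree one forces $p \equiv 1 \bmod 8$, so $4 \mid h(-p)$ automatically, and gives residue field $\Z[\zeta_8]/\mathfrak{p} \cong \FF_p$. Fix a generator $w$ of $\mathfrak{p}$ with associated $g > 0$ and $h$ (both odd) satisfying $p = 2g^2 - h^2$. Under $p \equiv 1 \bmod 8$, Hasse's criterion collapses to $8 \mid h(-p) \Leftrightarrow (-1/g) = 1$, which immediately disposes of the ``otherwise'' case: if $8 \nmid h(-p)$, then $(-1/g) = -1$ for every generator of $\mathfrak{p}$ by the well-definedness in Lemma~\ref{l8p}, and so $a_{\mathfrak{p}} = 0$ by (\ref{ean}).

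When $8 \mid h(-p)$ we have $(-1/g) = 1$ and $(g/p) = 1$, so $a_{\mathfrak{p}} = \frac{1}{4} \sum_{k=0}^{3} [\epsilon^k w]$. I would first evaluate the single symbol $[w]$ using degree-one reduction: Euler's criterion in the residue field $\FF_p$ gives
\[
[w]_1 = \left(\frac{g}{w}\right)_{4, M} \equiv g^{(p-1)/4} \equiv \left(\frac{g}{p}\right)_4 \bmod \mathfrak{p},
\]
with $(g/p)_4 \in \{\pm 1\}$ because $(g/p) = 1$, so the congruence is an equality. Thus $[w] = (g/p)_4 (2h/g)$, and the hypothesis $(-1/g) = 1$ permits replacing $h$ by $|h|$ without changing $(2h/g)$. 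Hence $[w]$ is precisely the Leonard--Williams symbol attached to the positive integer solution $(g, |h|)$ of $p = 2g^2 - h^2$; this equals $+1$ when $16 \mid h(-p)$ and $-1$ when $8 \mid h(-p)$ but $16 \nmid h(-p)$.

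It remains to show that all four $[\epsilon^k w]$ coincide, so the average in (\ref{ean}) equals their common value $[w]$. For each $k \in \{0,1,2,3\}$, the generator $\epsilon^k w$ produces a pair $(g_k, h_k)$ with $p = 2g_k^2 - h_k^2$, with $g_k > 0$ by the same sum-of-squares identity used for $g$, with $g_k, h_k$ odd, and with $(-1/g_k) = 1$ by Lemma~\ref{l8p}; the computation of the previous paragraph then identifies $[\epsilon^k w]$ with the Leonard--Williams symbol attached to the positive integer solution $(g_k, |h_k|)$. The main obstacle is a clean statement of the invariance of this symbol under the choice of positive solution to $p = 2g^2 - h^2$, which the Leonard--Williams theorem supplies implicitly since its left-hand side $16 \mid h(-p)$ is intrinsic to $p$; if one wished to avoid this implicit appeal, the transition $(g_k, h_k) \mapsto (g_{k+1}, h_{k+1})$ could be analysed by hand in the spirit of the proof of Lemma~\ref{lUnitAction}, using quadratic and quartic reciprocity. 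Either way, $[\epsilon^k w] = [w]$ for all $k$, yielding $a_{\mathfrak{p}} = [w] = \pm 1$ in agreement with (\ref{eap}).
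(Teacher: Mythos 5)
Your proposal is correct and follows essentially the same route as the paper: dispose of the $8 \nmid h(-p)$ case via Hasse's criterion together with Lemma~\ref{l8p}, then observe that for a degree-one $\mathfrak{p}$ the Leonard--Williams criterion---which depends only on $p$ and not on the particular positive solution to $p = 2g^2 - h^2$, and which remains valid for negative $h$ once $(-1/g) = 1$ holds---forces $[w] = [\epsilon w] = [\epsilon^2 w] = [\epsilon^3 w]$ to equal $+1$ or $-1$ according as $16 \mid h(-p)$ or not. The one thing you spell out that the paper leaves implicit is the Euler-criterion identification $\left(\frac{g}{w}\right)_{4,M} = \left(\frac{g}{p}\right)_4$ through the degree-one residue field, together with the observation that this value lies in $\{\pm 1\}$ because $(g/p) = 1$; this is a useful clarification but not a different argument.
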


\begin{proof}
Let $\mathfrak{p}$ be an unramified prime ideal of degree $1$ in $\Z[\zeta_8]$ and let $w$ be a generator of $\mathfrak{p}$. Put $p := \Norm w$. Lemma \ref{l8p} and the aforementioned result of Hasse imply
\[
w \text{ does not satisfy } (\ref{e8p}) \Longleftrightarrow 8 \nmid h(-p),
\]
and $a_{\mathfrak{p}}$ is indeed $0$ in this case. Now suppose that $w$ does satisfy (\ref{e8p}). Recall that
\[
[w] = \left(\frac{g}{w}\right)_{4, M} \left(\frac{2h}{g}\right),
\]
where $g$ and $h$ are explicit functions of $w$. We stress that these $g$ and $h$ are not necessarily the same $g$ and $h$ from Leonard and Williams. Indeed, Leonard and Williams require $g$ and $h$ to be positive, while our $h$ is not necessarily positive. However, since $w$ satisfies (\ref{e8p}), their criterion remains valid irrespective of the sign of $h$. Then, the criterion implies
\[
[w] = [\epsilon w] = [\epsilon^2 w] = [\epsilon^3 w].
\]
Furthermore, the criterion also shows that
\[
[w] = 1 \Longleftrightarrow 16 \mid h(-p).
\]
This completes the proof of our lemma.
\end{proof}

\section{Sums of type I}
\label{sSumI}
The goal of this section is to bound the following sum
\[
A(X, \mathfrak{d}) = \sum_{\substack{\Norm{\mathfrak{n}} \leq X \\ \mathfrak{d} \mid \mathfrak{n}}} a_{\mathfrak{n}} = \sum_{\substack{\Norm{\mathfrak{n}} \leq X \\ \mathfrak{d} \mid \mathfrak{n}, \mathfrak{n} \text{ odd}}} a_{\mathfrak{n}}.
\]
By picking a generator for $\mathfrak{n}$ we obtain
\[
A(X, \mathfrak{d}) = \frac{1}{8} \sum_{\substack{w \in \mathcal{D}(X) \\ w \equiv 0 \bmod \mathfrak{d} \\ w \text{ odd}}} a_{(w)} = \frac{1}{32} \sum_{\substack{w \in \mathcal{D}(X) \\ w \equiv 0 \bmod \mathfrak{d} \\ w \text{ odd}}} \mathbf{1}_{w \text{ sat. } (\ref{e8p})} \left([w] + [\epsilon w] + [\epsilon^2 w] + [\epsilon^3 w]\right).
\]
We define for $i = 0, \ldots, 3$ and $\rho$ an invertible congruence class modulo $2^{10}$
\[
A(X, \mathfrak{d}, u_i, \rho) := \sum_{\substack{w \in u_i\mathcal{D}(X) \\ w \equiv 0 \bmod \mathfrak{d} \\ w \equiv \rho \bmod 2^{10}}} [w] = \sum_{\substack{w \in u_i\mathcal{D}(X) \\ w \equiv 0 \bmod \mathfrak{d} \\ w \equiv \rho \bmod 2^{10}}} \left(\frac{g}{w}\right)_{4, M} \left(\frac{2h}{g}\right),
\]
where $u_i := \epsilon^i$. With this definition in place, we may split $A(X, \mathfrak{d})$ as follows
\[
A(X, \mathfrak{d}) = \frac{1}{32} \sum_{i = 0}^3 \sum_{\rho \in (O_M/2^{10}O_M)^\ast} \mathbf{1}_{\rho \text{ sat. } (\ref{e8p})} A(X, \mathfrak{d}, u_i, \rho),
\]
since the truth of equation (\ref{e8p}) depends only on $w$ modulo $4$. Then it is enough to bound each individual sum $A(X, \mathfrak{d}, u_i, \rho)$. In order to bound this sum, our first step is to carefully rewrite the symbol $[w]$ in a more tractable form. While doing so, we will find some hidden cancellation between $[w]_1$ and $[w]_2$ that is vital for making our results unconditional. 

Throughout this section we use the convention that $\mu(\cdot) \in \{\pm 1, \pm i\}$ is a function depending only on the variables between the parentheses; at each occurence $\mu(\cdot)$ may be a different function. Since our cancellation will come from fixing $b$, $c$ and $d$ while varying $a$, factors of the shape $\mu(\rho, b, c, d)$ will present no issues for us. Let us start by rewriting $[w]_2$. It follows from equation (\ref{ew2uv}) that
\begin{align}
\label{ew1Re1}
\left(\frac{2h}{g}\right) = \left(\frac{v}{u}\right) \mu(\rho).
\end{align}
Using the formulas for $u$ and $v$ we get
\begin{align}
\label{ew1Re2}
\left(\frac{v}{u}\right) = \left(\frac{ab - ad + bc + cd}{a^2 + b^2 + c^2 + d^2}\right).
\end{align}
If $v$ is not zero, we can uniquely factor $v$ as
\begin{align}
\label{efactorv}
v := v_1v_2t,
\end{align}
where $v_1$ is an odd, positive integer satisfying $\gcd(v_1, b - d) = 1$, $v_2$ is an odd integer consisting only of primes dividing $b - d$ and $t$ is positive and only divisible by powers of $2$. Then we have
\begin{align}
\label{ew1Re3}
\left(\frac{ab - ad + bc + cd}{a^2 + b^2 + c^2 + d^2}\right) = \left(\frac{v_1}{a^2 + b^2 + c^2 + d^2}\right) \left(\frac{tv_2}{a^2 + b^2 + c^2 + d^2}\right).
\end{align}
Let $\rho'$ be the congruence class of $v_1$ modulo $8$. Using the following identity modulo $v$
\[
a^2(b - d)^2 \equiv c^2(b + d)^2 \bmod v
\]
and the fact that this identity continues to hold for any divisor of $v$, so in particular for $v_1$, we rewrite the first factor of equation (\ref{ew1Re3}) as follows
\begin{align}
\label{ew1Re4}
\left(\frac{v_1}{a^2 + b^2 + c^2 + d^2}\right) &= \mu(\rho, \rho') \left(\frac{a^2 + b^2 + c^2 + d^2}{v_1}\right) \nonumber \\
&= \mu(\rho, \rho') \left(\frac{(a^2 + b^2 + c^2 + d^2)(b - d)^2}{v_1}\right) \nonumber \\
&= \mu(\rho, \rho') \left(\frac{a^2(b - d)^2 + (b^2 + c^2 + d^2)(b - d)^2}{v_1}\right) \nonumber \\
&= \mu(\rho, \rho') \left(\frac{c^2(b + d)^2 + (b^2 + c^2 + d^2)(b - d)^2}{v_1}\right) \nonumber \\
&= \mu(\rho, \rho') \left(\frac{(b^2 + d^2)(2c^2 + (b - d)^2)}{v_1}\right).
\end{align}
Stringing together (\ref{ew1Re1}), (\ref{ew1Re2}), (\ref{ew1Re3}) and (\ref{ew1Re4}), we conclude that
\begin{align}
\label{eConc1}
\left(\frac{2h}{g}\right) = \mu(\rho, \rho') \left(\frac{(b^2 + d^2)(2c^2 + (b - d)^2)}{v_1}\right) \left(\frac{tv_2}{a^2 + b^2 + c^2 + d^2}\right).
\end{align}
Our next goal is to simplify $[w]_1$. We have by equation (\ref{ew1}) and Theorem \ref{t4R}
\begin{align}
\label{ew2Re1}
\left(\frac{g}{w}\right)_{4, M} = \left(\frac{\left(\frac{1}{2} - \frac{1}{2\sqrt{2}}\right)\sigma(w)\sigma\tau(w)}{w}\right)_{4, M} = \mu(\rho) \left(\frac{\sigma(w)\sigma\tau(w)}{w}\right)_{4, M}.
\end{align}
The quartic residue symbol in equation (\ref{ew2Re1}) is the product of two quartic residue symbols. One of them is equal to
\begin{align}
\label{ew2Re2}
\left(\frac{\sigma\tau(w)}{w}\right)_{4, M} &= \left(\frac{a + d\zeta_8 - c\zeta_8^2 + b\zeta_8^3}{a + b\zeta_8 + c\zeta_8^2 + d\zeta_8^3}\right)_{4, M} = \left(\frac{-2c\zeta_8^2 + (d - b)(\zeta_8 - \zeta_8^3)}{a + b\zeta_8 + c\zeta_8^2 + d\zeta_8^3}\right)_{4, M} \nonumber \\
&= \left(\frac{\zeta_8^2}{a + b\zeta_8 + c\zeta_8^2 + d\zeta_8^3}\right)_{4, M} \left(\frac{-2c + (b - d)(\zeta_8 + \zeta_8^3)}{a + b\zeta_8 + c\zeta_8^2 + d\zeta_8^3}\right)_{4, M} \nonumber \\
&= \mu(\rho) \left(\frac{-2c + (b - d)(\zeta_8 + \zeta_8^3)}{a + b\zeta_8 + c\zeta_8^2 + d\zeta_8^3}\right)_{4, M},
\end{align}
where the last equality is due to Theorem \ref{t4R}. For the remainder of this section we assume that $b - d$ is not zero. We factor $-2c + (b - d)(\zeta_8 + \zeta_8^3)$ in the ring $\Z[\sqrt{-2}]$ as
\[
-2c + (b - d)(\zeta_8 + \zeta_8^3) = \eta^4 e_0 e
\]
with $\eta$ and $e_0$ consisting only of even prime factors, $e_0$ not divisible by a non-trivial fourth power and $e$ odd. This factorization is unique up to multiplication by units. Then we have by Theorem \ref{t4R}
\begin{align}
\label{ew2Re3}
\left(\frac{-2c + (b - d)(\zeta_8 + \zeta_8^3)}{a + b\zeta_8 + c\zeta_8^2 + d\zeta_8^3}\right)_{4, M} = \mu(\rho, b, c, d) \left(\frac{a + b\zeta_8 + c\zeta_8^2 + d\zeta_8^3}{e}\right)_{4, M}.
\end{align}
But a simple computation shows
\[
a + b\zeta_8 + c\zeta_8^2 + d\zeta_8^3 \equiv \sigma\tau(a + b\zeta_8 + c\zeta_8^2 + d\zeta_8^3) \bmod e.
\]
Let $\mathfrak{p}$ be a prime in $\Z[\sqrt{-2}]$ that divides $e$. Then we may replace $a + b\zeta_8 + c\zeta_8^2 + d\zeta_8^3$ by some element in $\Z[\sqrt{-2}]$ by Lemma 3.4 of \cite{KM2}. In case $\mathfrak{p}$ splits in $M$, we apply Lemma 3.2 of \cite{KM2}. While if $\mathfrak{p}$ remains inert, we see that $\mathfrak{p}$ is of degree $1$ and $\Norm \mathfrak{p} \equiv 3 \bmod 8$. In this case we apply Lemma 3.3 of \cite{KM2}. Hence in all cases
\[
\left(\frac{a + b\zeta_8 + c\zeta_8^2 + d\zeta_8^3}{\mathfrak{p}}\right)_{4, M} = \mathbbm{1}_{\gcd(a + b\zeta_8 + c\zeta_8^2 + d\zeta_8^3, \mathfrak{p}) = (1)}.
\]
This yields
\begin{align}
\label{ew2Re4}
\left(\frac{a + b\zeta_8 + c\zeta_8^2 + d\zeta_8^3}{e}\right)_{4, M} = \mathbbm{1}_{\gcd(w, \sigma\tau(w)) = (1)}.
\end{align}
We deduce from equation (\ref{ew2Re2}), (\ref{ew2Re3}) and (\ref{ew2Re4}) that
\begin{align}
\label{eConc2}
\left(\frac{\sigma\tau(w)}{w}\right)_{4, M} = \mu(\rho, b, c, d) \mathbbm{1}_{\gcd(w, \sigma\tau(w)) = (1)}.
\end{align}
We will now study the other quartic residue symbol in equation (\ref{ew2Re1}) using very similar methods. We start with the identity
\begin{align}
\label{ew2Re5}
\left(\frac{\sigma(w)}{w}\right)_{4, M} &= \left(\frac{a - b\zeta_8 + c\zeta_8^2 - d\zeta_8^3}{a + b\zeta_8 + c\zeta_8^2 + d\zeta_8^3}\right)_{4, M} = \left(\frac{-2\zeta_8(b + d\zeta_8^2)}{a + b\zeta_8 + c\zeta_8^2 + d\zeta_8^3}\right)_{4, M} \nonumber \\
&= \left(\frac{-2\zeta_8}{a + b\zeta_8 + c\zeta_8^2 + d\zeta_8^3}\right)_{4, M} \left(\frac{b + d\zeta_8^2}{a + b\zeta_8 + c\zeta_8^2 + d\zeta_8^3}\right)_{4, M} \nonumber \\
&= \mu(\rho) \left(\frac{b + d\zeta_8^2}{a + b\zeta_8 + c\zeta_8^2 + d\zeta_8^3}\right)_{4, M},
\end{align}
where we use Theorem \ref{t4R} once more. We choose $i := \zeta_8^2$ and factor $b + di$ in the ring $\Z[i]$ as
\[
b + di = \eta'^4 e'_0 e'
\]
with $\eta'$ and $e'_0$ consisting only of even prime factors, $e'_0$ not divisible by a non-trivial fourth power and $e'$ odd. Such a factorization is unique up to multiplication by units. With this factorization we have due to Theorem \ref{t4R}
\begin{align}
\label{ew2Re6}
\left(\frac{b + di}{a + b\zeta_8 + c\zeta_8^2 + d\zeta_8^3}\right)_{4, M} = \mu(\rho, b, c, d) \left(\frac{a + b\zeta_8 + c\zeta_8^2 + d\zeta_8^3}{e'}\right)_{4, M}.
\end{align}
We claim that
\begin{align}
\label{ew2Re7}
\left(\frac{a + b\zeta_8 + c\zeta_8^2 + d\zeta_8^3}{e'}\right)_{4, M} = \left(\frac{a + c\zeta_8^2}{e'}\right)_{4, M} = \left(\frac{a + ci}{e'}\right)_{2, \Q(i)}.
\end{align}
Indeed, let $\mathfrak{p}$ be a prime in $\Z[i]$ that divides $e'$. If $\mathfrak{p}$ splits in $M$, Lemma 3.2 of \cite{KM2} shows that
\[
\left(\frac{a + c\zeta_8^2}{\mathfrak{p}}\right)_{4, M} = \left(\frac{a + ci}{\mathfrak{p}}\right)_{2, \Q(i)}.
\]
Suppose instead that $\mathfrak{p}$ remains inert. Then $\mathfrak{p}$ is of degree $1$ and $\Norm \mathfrak{p} \equiv 5 \bmod 8$. Now we apply Lemma 3.3 of \cite{KM2} to obtain
\[
\left(\frac{a + c\zeta_8^2}{\mathfrak{p}}\right)_{4, M} = \left(\frac{a + ci}{\mathfrak{p}}\right)_{2, \Q(i)}.
\]
This establishes our claim and hence equation (\ref{ew2Re6}). Combining (\ref{ew2Re5}), (\ref{ew2Re6}) and (\ref{ew2Re7}) acquires the validity of
\begin{align}
\label{eConc3}
\left(\frac{\sigma(w)}{w}\right)_{4, M} = \mu(\rho, b, c, d) \left(\frac{a + ci}{e'}\right)_{2, \Q(i)}.
\end{align}
Put
\[
f(w, \rho) := \mu(\rho, \rho', b, c, d) \mathbbm{1}_{\gcd(w, \sigma\tau(w)) = (1)} \left(\frac{tv_2}{a^2 + b^2 + c^2 + d^2}\right).
\]
Using (\ref{eConc1}), (\ref{eConc2}) and (\ref{eConc3}), we conclude that
\begin{align}
\label{efwr}
\left(\frac{g}{w}\right)_{4, M} \left(\frac{2h}{g}\right) = f(w, \rho) \left(\frac{(b^2 + d^2)(2c^2 + (b - d)^2)}{v_1}\right) \left(\frac{a + ci}{e'}\right)_{2, \Q(i)}.
\end{align}
Our hidden cancellation will come from comparing the Jacobi symbols
\[
\left(\frac{b^2 + d^2}{v_1}\right) \text{  and  } \left(\frac{a + ci}{e'}\right)_{2, \Q(i)}.
\]
Our goal is to show that these two Jacobi symbols are equal up to some easily controlled factors. We can uniquely factor 
\[
b^2 + d^2 = z_1z_2,
\]
where $z_1$ and $z_2$ are positive integers satisfying 
\begin{itemize}
\item $(z_1, z_2) = 1$;
\item $z_1$ odd and squarefree;
\item if $p$ is odd and divides $z_2$, then also $p^2$ divides $z_2$.
\end{itemize}
With this factorization we have
\[
\left(\frac{b^2 + d^2}{v_1}\right) = \left(\frac{z_1}{v_1}\right) \left(\frac{z_2}{v_1}\right) = \mu(\rho', b, c, d) \left(\frac{v_1}{z_1}\right) \left(\frac{z_2}{v_1}\right).
\]
In a similar vein we uniquely factor, up to multiplication by units, $e'$ in $\Z[i]$ as
\[
e' = \gamma_1 \gamma_2
\]
with $(\Norm\gamma_1, \Norm\gamma_2) = (1)$, $\Norm\gamma_1$ squarefree and $\Norm\gamma_2$ squarefull. The point of this factorization is that $\Norm\gamma_1 = z_1$. This gives
\[
\left(\frac{v_1}{z_1}\right) = \left(\frac{v_1}{\gamma_1}\right)_{2, \Q(i)}.
\]
We claim that
\begin{align}
\label{eGcd}
(tv_2, \gamma_1) = (d, \gamma_1) = (1).
\end{align}
We clearly have $(t, \gamma_1) = (1)$, so we first show that $(v_2, \gamma_1) = (1)$. Let $\mathfrak{p}$ be an odd prime of $\Z[i]$ above $p$ such that $\mathfrak{p} \mid v_2$ and $\mathfrak{p} \mid \gamma_1$. Then we have $p \mid v_2$ and $\Norm\mathfrak{p} \mid \Norm\gamma_1$. However, $v_2$ is composed entirely of primes dividing $b - d$, while $\Norm\gamma_1$ divides $b^2 + d^2$. We conclude that $p$ divides both $b$ and $d$. But then $p$ can not divide $\gamma_1$ by construction. We can prove in a similar way that $(d, \gamma_1) = (1)$, thus proving the claim.

From equation (\ref{eGcd}) we acquire the validity of
\begin{align*}
\left(\frac{v_1}{z_1}\right) &= \left(\frac{v_1}{\gamma_1}\right)_{2, \Q(i)} = \mu(b, c, d, t) \left(\frac{v_2}{\gamma_1}\right)_{2, \Q(i)} \left(\frac{v}{\gamma_1}\right)_{2, \Q(i)} \\
&= \mu(b, c, d, t) \left(\frac{v_2}{\gamma_1}\right)_{2, \Q(i)} \left(\frac{a + ci}{\gamma_1}\right)_{2, \Q(i)} \left(\frac{-d(1 + i)}{\gamma_1}\right)_{2, \Q(i)} \\
&= \mu(b, c, d, t) \left(\frac{v_2}{\gamma_1}\right)_{2, \Q(i)} \left(\frac{a + ci}{\gamma_1}\right)_{2, \Q(i)},
\end{align*}
where we use the identity
\[
v = ab - ad + bc + cd \equiv -ad(1 + i) + cd(1 - i) = -d(1 + i)(a + ci) \bmod \gamma_1.
\]
We conclude that
\begin{multline}
\label{egwr}
\left(\frac{b^2 + d^2}{v_1}\right) \left(\frac{a + ci}{e'}\right)_{2, \Q(i)} = \\
\mu(\rho, \rho', b, c, d, t) \left(\frac{z_2}{v_1}\right) \left(\frac{v_2}{\gamma_1}\right)_{2, \Q(i)} \left(\frac{a + ci}{\gamma_2}\right)_{2, \Q(i)} \mathbbm{1}_{\gcd(a + ci, \gamma_1) = (1)}.
\end{multline}
Put
\begin{multline}
g(w, \rho) := \mu(\rho, \rho', b, c, d, t) \left(\frac{tv_2}{a^2 + b^2 + c^2 + d^2}\right)\\
\left(\frac{z_2}{v_1}\right) \left(\frac{v_2}{\gamma_1}\right)_{2, \Q(i)} \left(\frac{a + ci}{\gamma_2}\right)_{2, \Q(i)} \mathbbm{1}_{\gcd(a + ci, \gamma_1) = \gcd(w, \sigma\tau(w)) = (1)}. \nonumber
\end{multline}
After combining equations (\ref{efwr}) and (\ref{egwr}), we get
\begin{align*}
\left(\frac{g}{w}\right)_{4, M} \left(\frac{2h}{g}\right) &= g(w, \rho) \left(\frac{2c^2 + (b - d)^2}{v_1}\right) \\
&= \mu(\rho, \rho', b, c, d, t) g(w, \rho) \left(\frac{v_1}{2c^2 + (b - d)^2}\right).
\end{align*}
With this formula we have finally rewritten our symbol in a satisfactory manner; we now return to estimating the sum $A(X, \mathfrak{d}, u_i, \rho)$. We recall the factorization $v = v_1v_2t$, where $v_1$ is an odd, positive integer satisfying $\gcd(v_1, b - d) = 1$, $v_2$ is an odd integer consisting only of primes dividing $b - d$ and $t$ is positive and only divisible by powers of $2$. We further recall that $\rho'$ is the congruence class of $v_1$ modulo $8$.

Let $2^\alpha$ be the closest integer power of $2$ to $X^{\frac{1}{100}}$. We fix $b, c, d$ such that $b - d$ has $2$-adic valuation at most $\frac{\alpha}{2}$. If $a$ modulo $2^\alpha$ is given, we claim that $v_{\text{odd}}$ is determined modulo $8$, where $v_{\text{odd}}$ is the odd part of
\begin{align}
\label{vOdd}
v = a(b - d) + c(b + d),
\end{align}
with the exception of $\ll X^{\frac{1}{200}}$ congruence classes $\rho''$ for $a$ modulo $2^\alpha$. Note that, for fixed $b$, $c$ and $d$, $\rho''$ determines $v$ modulo $2^\alpha$. If $\alpha \geq 3$, $v$ modulo $2^\alpha$ determines $v_{\text{odd}}$ modulo 8 unless $v$ is divisible by $2^{\alpha - 3}$. There are only $8$ congruence classes modulo $2^\alpha$ divisible by $2^{\alpha - 3}$. Now take such a congruence class, say $\rho'''$. But there are $\ll X^{\frac{1}{200}}$ congruence classes $\rho''$ modulo $2^\alpha$ with
\[
\rho''(b - d) + c(b + d) \equiv \rho''' \bmod 2^\alpha
\]
by our assumption that the $2$-adic valuation of $b - d$ is at most $\frac{\alpha}{2}$, and our claim follows.

Similarly, we know the value of $t$ with the exception of $\ll X^{\frac{1}{200}}$ congruence classes for $a$ modulo $2^\alpha$. We remove all such congruence classes from the sum, which gives an error of size at most $X^{\frac{199}{200}}$. From now on we assume that $a$ does not lie in such a congruence class. For the remaining congruence classes modulo $2^\alpha$, we observe that $\rho'$ is determined by $v_{\text{odd}}$ modulo $8$ together with $b$, $c$ and $d$. Hence both $\rho'$ and $t$ are determined by $a$ modulo $2^\alpha$. 

We would also like to treat $v_2$ as fixed, and we use a similar technique to achieve this. Once more we fix $b$, $c$ and $d$. We assume that
\[
\gcd(b - d, bc + cd) \leq \exp\left(\left(\log X\right)^{0.25}\right).
\]
We can uniquely factor a positive integer $n$ as $x_1x_2$, where $\gcd(x_1, x_2) = 1$, $x_1 > 0$ is squarefree and $x_2 > 0$ is squarefull. We call $x_1$ the squarefree part, and $x_2$ the squarefull part. We further assume that the squarefull part of $b - d$ is of size at most $\exp\left(\left(\log X\right)^{0.25}\right)$. We now factor
\[
\gcd(b - d, bc + cd) = \prod_{i = 1}^k p_i^{f_i}.
\]
Define $f_i'(p_i)$ to be the smallest integer such that
\[
p_i^{f_i'(p_i)} \geq \exp\left(2\left(\log X\right)^{0.25}\right)
\]
and define
\[
G := \prod_{i = 1}^k p_i^{f_i'(p_i)}.
\]
Clearly, we have that $\gcd(b - d, bc + cd)$ divides $G$, since the squarefull part of $b - d$ is of size at most $\exp\left(\left(\log X\right)^{0.25}\right)$. If $a$ modulo $G$ is given, we claim that $v_2$ is determined modulo $G$ with the exception of at most 
\[
\ll \log X \min_{1 \leq i \leq k} \frac{G}{p_i^{f_i'(p_i)}}
\]
congruence classes $\rho''$ for $a$ modulo $G$. Take a prime divisor $p_i$ of $b - d$. If $p_i$ does not divide $bc + cd$, then clearly
\[
p_i \nmid a(b - d) + bc + cd,
\]
so we have found the $p_i$ valuation of $a(b - d) + bc + cd$. Now suppose that $p_i$ also divides $bc + cd$. Then we know the $p_i$ valuation unless
\[
a(b - d) + bc + cd \equiv 0 \bmod p_i^{f_i'(p_i)}.
\]
However, we know that the $p_i$ valuation of $b - d$ is at most $f_i'(p_i)/2$. Hence there are at most $p_i^{f_i'(p_i)/2}$ congruence classes for $a$ modulo $p_i^{f_i'(p_i)/2}$ for which
\[
a(b - d) + bc + cd \equiv 0 \bmod p_i^{f_i'(p_i)},
\]
and we call such a congruence class forbidden. We let $G_i$ be the set of forbidden congruence classes modulo $p_i^{f_i(p_i)}$. Now we discard all congruence classes $\rho''$ modulo $G$ for which there exists a prime $p_i$ dividing $\gcd(b - d, bc + cd)$ such that the reduction of $\rho''$ modulo $p_i^{f_i(p_i)}$ lies in $G_i$. This proves the claim.

Set
\begin{align}
\label{eDefm}
m := \text{lcm}\left(G, z_2, \Norm \gamma_2, 2^\alpha, 2^{10}\right).
\end{align}
Then
\[
\left(\frac{tv_2}{a^2 + b^2 + c^2 + d^2}\right) \left(\frac{z_2}{v_1}\right) \left(\frac{v_2}{\gamma_1}\right)_{2, \Q(i)} \left(\frac{a + ci}{\gamma_2}\right)_{2, \Q(i)}
\]
depends only on $a$ modulo $m$, $b$, $c$ and $d$. If we write $\beta := b\zeta_8 + c\zeta_8^2 + d\zeta_8^3$, we have the following estimate
\[
A(X, \mathfrak{d}, u_i, \rho) \ll \sum_\beta \sum_{f \in \Z/m\Z} \left|\sum_{\substack{a \in \Z \\ a \text{ sat. } (\ast)}} \hspace{-0.3cm} \left(\frac{v_1}{2c^2 + (b - d)^2}\right) \mathbbm{1}_{\gcd(a + ci, \gamma_1) = \gcd(a + \beta, \sigma\tau(a + \beta)) = (1)}\right|,
\]
where $(\ast)$ are the simultaneous conditions
\[
a + \beta \in u_i\mathcal{D}(X), \quad a + \beta \equiv 0 \bmod \mathfrak{d}, \quad a + \beta \equiv \rho \bmod 2^{10}, \quad a \equiv f \bmod m.
\]
Recall that the condition $a + \beta \in u_i\mathcal{D}(X)$ implies $a, b, c, d \ll X^{\frac{1}{4}}$, see Lemma \ref{lFD}. We will only consider $\beta$ satisfying the following five properties 
\begin{itemize}
\item $z_2, \Norm\gamma_2 \leq X^{\frac{1}{200}}$;
\item $\gcd(b - d, bc + cd) \leq \exp\left(\left(\log X\right)^{0.25}\right)$;
\item the $2$-adic valuation of $b - d$ is at most $\frac{\alpha}{2}$;
\item the squarefull part of $b - d$ is of size at most $\exp\left(\left(\log X\right)^{0.25}\right)$;
\item the odd, squarefree part of $2c^2 + (b - d)^2$ is at least $X^{\frac{99}{200}}$.
\end{itemize}
We claim that there are at most
\[
\ll \frac{X^{\frac{3}{4}}}{\exp\left(\left(\log X\right)^{0.2}\right)}
\]
elements $\beta$ that do not satisfy all five conditions. To do so, we shall bound the number of $\beta$ that fail a given bullet point in the above list. For the third and fourth bullet point this is easily verified. For the fifth bullet point, we use that $2c^2 + (b - d)^2$ represents a given integer at most $\ll_\epsilon X^{\frac{1}{4} + \epsilon}$ times, and this reduces the problem to an easy counting problem. A similar argument disposes with the first bullet point. Finally, for the second bullet point, we count the number of $\beta$ such that
\[
\gcd(b - d, b + d) > \exp\left(\frac{1}{2}\left(\log X\right)^{0.25}\right) \text{ or } \gcd(b - d, c) > \exp\left(\frac{1}{2}\left(\log X\right)^{0.25}\right).
\]
For those $\beta$, we bound the inner sum trivially by $\ll X^{\frac{1}{4}}/m$ inducing an error of size 
\[
\ll \frac{X}{\exp\left(\left(\log X\right)^{0.2}\right)}.
\]
For the remaining $\beta$, we have $G \ll_\epsilon X^\epsilon$ and hence $m \ll_\epsilon X^{\frac{1}{50} + \epsilon}$ by the first bullet point and the definition of $m$, see equation (\ref{eDefm}). Note that
\[
\mathbbm{1}_{\gcd(a + \beta, \sigma\tau(a + \beta)) = (1)} = \mathbbm{1}_{\gcd(a + \beta, \sigma\tau(\beta) - \beta) = (1)}.
\]
We use the M\"obius function to detect the coprimality conditions, which yields the following upper bound
\[
A(X, \mathfrak{d}, u_i, \rho) \ll \sum_\beta \sum_{f \in \Z/m\Z} \sum_{\mathfrak{d}_1 \mid \gamma_1} \sum_{\mathfrak{d}_2 \mid \sigma\tau(\beta) - \beta} \left|\sum_{\substack{a \in \Z \\ a \text{ sat. } (\ast\ast)}} \left(\frac{v_1}{2c^2 + (b - d)^2}\right)\right|,
\]
where $(\ast \ast)$ are the simultaneous conditions
\begin{align*}
a + \beta \in u_i\mathcal{D}(X), \quad &a + \beta \equiv 0 \bmod \mathfrak{d}, \quad a + \beta \equiv \rho \bmod 2^{10}, \quad a \equiv f \bmod m \\
&a + ci \equiv 0 \bmod \mathfrak{d_1}, \quad a + \beta \equiv 0 \bmod \mathfrak{d_2}.
\end{align*}
Define $m'$ to be the smallest positive integer that is divisible by $\text{lcm}(\mathfrak{d}, \mathfrak{d}_1, \mathfrak{d}_2)$. Put
\[
M := \text{lcm}\left(m, m'\right).
\]
The congruence conditions for $a$ in $(\ast \ast)$ are equivalent to at most one congruence condition modulo $M$. We assume that it is equivalent to exactly one congruence condition modulo $M$, say $F$, otherwise the inner sum is empty. Then we have
\begin{align}
\label{eBurgess2}
A(X, \mathfrak{d}, u_i, \rho) \ll \sum_\beta \sum_{f \in \Z/m\Z} \sum_{\mathfrak{d}_1 \mid \gamma_1} \sum_{\mathfrak{d}_2 \mid \sigma\tau(\beta) - \beta} \left|\sum_{\substack{a \in \Z \\ a + \beta \in u_i\mathcal{D}(X) \\ a \equiv F \bmod M}} \left(\frac{v_1}{2c^2 + (b - d)^2}\right)\right|.
\end{align}
We assume that $M \leq X^{\frac{1}{8}}$, since otherwise the trivial bound suffices. Furthermore, for fixed $\beta$, the condition $a + \beta \in u_i\mathcal{D}(X)$ means that $a$ runs over $\ll 1$ intervals with endpoints depending on $\beta$ and $u_i$. Since $a \ll X^{\frac{1}{4}}$, we know that each interval has length $\ll X^{\frac{1}{4}}$. We have the factorization 
\[
2c^2 + (b - d)^2 = q_1q_2,
\]
where $q_1$ is the odd, squarefree part. We know that $q_2 \ll X^{\frac{1}{200}}$, and we split the sum over $a$ in congruence classes modulo $q_2$. For fixed $b$, $c$ and $d$, the condition $a \equiv F \bmod M$ implies that $v_1$ is a linear function of $a$ with linear term not divisible by $q_1$ by our assumptions $q_1 \geq X^{\frac{99}{200}}$ and $M \leq X^{\frac{1}{8}}$. Indeed, $v_2$ and $t$ are determined by $F$, so this follows immediately from equation (\ref{efactorv}). Hence we may employ the Burgess bound \cite{Burgess} to equation (\ref{eBurgess2}) with $r =2$ and $q = q_1 \ll X^{\frac{1}{2}}$ to prove
\[
A(X, \mathfrak{d}, u_i, \rho) \ll_\epsilon X^{\frac{31}{32} + \frac{1}{50} + \frac{1}{200} + \epsilon} + X^{\frac{199}{200}} + X^{\frac{15}{16}} + \frac{X}{\exp\left(\left(\log X\right)^{0.2}\right)},
\]
where the second term accounts for the discarded congruence classes for $a$, the third term accounts for those $M$ with $M > X^{\frac{1}{8}}$ and the fourth term accounts for the discarded $\beta$. This establishes the following proposition.

\begin{prop}
\label{pSumI}
We have for all ideals $\mathfrak{d}$ of $\Z[\zeta_8]$
\[
A(X, \mathfrak{d}) \ll \frac{X}{\exp\left(\left(\log X\right)^{0.2}\right)}.
\]
\end{prop}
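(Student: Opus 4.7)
The plan is to decompose $A(X, \mathfrak{d})$ into the pieces $A(X, \mathfrak{d}, u_i, \rho)$ indexed by the unit representatives $u_i = \epsilon^i$ and the invertible residue classes $\rho$ modulo $2^{10}$, since the truth of (\ref{e8p}) is determined by $w$ modulo $4$. Choosing a generator $w = a + b\zeta_8 + c\zeta_8^2 + d\zeta_8^3$, I would view each piece as a sum over the integer coordinate $a$ with $b, c, d$ held fixed, and bound it by exhibiting a short character sum amenable to Burgess.

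The technical heart is to rewrite the symbol $[w] = (g/w)_{4, M}(2h/g)$ in a form exposing hidden cancellation between its two factors. For $[w]_1$ I would exploit the identity $g = \left(\frac{1}{2} - \frac{1}{2\sqrt{2}}\right)\sigma(w)\sigma\tau(w)$ and quartic reciprocity (Theorem \ref{t4R}) to split off $(\sigma(w)/w)_{4, M}$ and $(\sigma\tau(w)/w)_{4, M}$; factoring $b + di$ in $\Z[i]$ and $-2c + (b-d)(\zeta_8+\zeta_8^3)$ in $\Z[\sqrt{-2}]$, and invoking Lemmas~3.2--3.4 of \cite{KM2} on each resulting prime, these collapse to a coprimality indicator and a quadratic symbol $\left(\frac{a+ci}{e'}\right)_{2, \Q(i)}$ times nuisance factors depending only on $\rho, b, c, d$. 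For $[w]_2$, quadratic reciprocity together with the congruence $a^2(b-d)^2 \equiv c^2(b+d)^2 \bmod v_1$ yields a factor $\left(\frac{(b^2+d^2)(2c^2+(b-d)^2)}{v_1}\right)$. Matching the factorization $b^2 + d^2 = z_1 z_2$ with a corresponding factorization $e' = \gamma_1 \gamma_2$ in $\Z[i]$ so that $\Norm \gamma_1 = z_1$, the identity $v \equiv -d(1+i)(a+ci) \bmod \gamma_1$ forces the Jacobi symbols $(v_1/z_1)$ and $\left(\frac{a+ci}{\gamma_1}\right)_{2, \Q(i)}$ to cancel. What survives is a single symbol $\left(\frac{v_1}{2c^2 + (b - d)^2}\right)$ multiplied by factors depending only on $b, c, d$ and on $a$ modulo a controlled modulus $m$.

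With this reformulation in hand, I would restrict to $\beta = b\zeta_8 + c\zeta_8^2 + d\zeta_8^3$ satisfying a short regularity list (small squarefull part of $b-d$, small $\gcd(b-d, bc+cd)$, odd squarefree part of $2c^2 + (b-d)^2$ at least $X^{99/200}$, etc.), and dispose of the bad $\beta$ by elementary counting, using that $2c^2 + (b-d)^2$ has few preimages. Detecting the remaining coprimality conditions by M\"obius introduces divisors $\mathfrak{d}_1$ and $\mathfrak{d}_2$; after combining all congruence constraints on $a$ into a single condition modulo $M = \text{lcm}(m, \mathfrak{d}, \mathfrak{d}_1, \mathfrak{d}_2)$, one obtains a Jacobi symbol of a linear function of $a$ modulo the odd squarefree part $q_1$ of $2c^2 + (b-d)^2$, summed over an interval of length $\asymp X^{1/4}$. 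Since $q_1 \ll X^{1/2}$, applying Burgess with $r = 2$ saves a small power of $X$ on the trivial bound, which together with the polynomial-in-$M$ loss and the error terms from discarded $a$ and $\beta$ delivers the bound $X / \exp((\log X)^{0.2})$.

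The main obstacle is the algebraic cancellation step of the second paragraph: one must arrange the factorizations of $b^2+d^2$ in $\Z$ and of $b+di$ in $\Z[i]$ compatibly, and then verify via Lemmas~3.2--3.4 of \cite{KM2} that the quartic symbol over $M = \Q(\zeta_8)$ degenerates to a quadratic symbol over $\Q(i)$ on every prime of $e'$, handling the split and inert cases separately. Once this hidden cancellation is extracted, the remaining analysis is a routine application of Burgess combined with careful bookkeeping of the moduli and error terms.
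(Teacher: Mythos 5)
Your proposal reproduces the paper's argument step for step: the same decomposition into $A(X, \mathfrak{d}, u_i, \rho)$, the same reformulation of $[w]_1$ and $[w]_2$ via quartic and quadratic reciprocity together with the factorizations of $-2c + (b-d)(\zeta_8 + \zeta_8^3)$, $b + di$, $b^2 + d^2 = z_1 z_2$, and $e' = \gamma_1\gamma_2$, the same hidden cancellation forced by $v \equiv -d(1+i)(a+ci) \bmod \gamma_1$, the same regularity conditions on $\beta$, and the same closing Burgess application with $r = 2$ to the surviving symbol $\left(\frac{v_1}{2c^2 + (b-d)^2}\right)$. This is essentially the paper's own proof.
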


\section{Sums of type II}
\label{sSumII}
In equation (\ref{ewsplit}) we defined $[w]_1$ and $[w]_2$. We have the useful decomposition
\[
[w] = [w]_1 [w]_2.
\]
In this section we need to carefully study the multiplicative properties of $[w]$, and we do so by studying the multiplicative properties of $[w]_1$ and $[w]_2$. These properties will then be used to prove cancellation in sums of type II. We start by studying $[w]_1$; our treatment is almost identical to \cite{KM2}. If $w$ is an odd element of $\Z[\zeta_8]$, we have
\[
[w]_1 = \left(\frac{\left(\frac{1}{2} - \frac{1}{2\sqrt{2}}\right)\sigma(w)\sigma\tau(w)}{w}\right)_{4, M} = \left(\frac{\left(2 - \sqrt{2}\right)\sigma(w)\sigma\tau(w)}{w}\right)_{4, M}.
\]
Define
\begin{align}
\label{egamma1}
\gamma_1(w, z) := \left(\frac{\sigma(z)}{w}\right)_{2, M}.
\end{align}
For the remainder of this section, we use the convention that $\delta(w, z)$ is a function depending only on the congruence classes of $w$ and $z$ modulo $2^{10}$; at each occurence $\delta(w, z)$ may be a different function.

\begin{lemma}
\label{lMw1}
We have for all odd $w, z \in \Z[\zeta_8]$
\[
[wz]_1 = \delta(w, z) [w]_1 [z]_1 \gamma_1(w, z) \mathbbm{1}_{\gcd(w, \sigma\tau(z)) = (1)}.
\]
\end{lemma}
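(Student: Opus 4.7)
The plan is to expand $[wz]_1$ via multiplicativity of the quartic residue symbol in both arguments and to reduce the lemma to a manipulation of cross terms. Starting from
\[
[w]_1 = \left(\frac{(2-\sqrt{2})\sigma(w)\sigma\tau(w)}{w}\right)_{4,M},
\]
I use that $\sigma(wz)=\sigma(w)\sigma(z)$ and similarly for $\sigma\tau$, and apply multiplicativity of $(\cdot/\cdot)_{4,M}$ in both entries. After cancellation of the $(2-\sqrt{2})$-factors one obtains
\[
[wz]_1 = [w]_1[z]_1\left(\frac{\sigma(w)\sigma\tau(w)}{z}\right)_{4,M}\left(\frac{\sigma(z)\sigma\tau(z)}{w}\right)_{4,M}.
\]
So it remains to show that the product of the two cross symbols equals $\delta(w,z)\gamma_1(w,z)\mathbbm{1}_{\gcd(w,\sigma\tau(z))=(1)}$.

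The decisive input is the action of the Galois automorphisms on the value group $\{\pm 1,\pm i\}$ of the quartic symbol. Since $\sigma(\zeta_8)=-\zeta_8$ one has $\sigma(i)=i$, so $\sigma$ fixes all values pointwise; whereas $\sigma\tau(\zeta_8)=\zeta_8^3$ gives $\sigma\tau(i)=-i$, so $\sigma\tau$ acts as complex conjugation. Combined with the functorial identity
\[
\left(\frac{\kappa(\alpha)}{\kappa(\beta)}\right)_{4,M}=\kappa\left(\left(\frac{\alpha}{\beta}\right)_{4,M}\right),
\]
valid for any Galois automorphism $\kappa$ of $M$ (immediate from Euler's criterion and Galois-invariance of the norm), this yields
\[
\left(\frac{z}{\sigma(w)}\right)_{4,M}=\left(\frac{\sigma(z)}{w}\right)_{4,M},\qquad\left(\frac{z}{\sigma\tau(w)}\right)_{4,M}=\overline{\left(\frac{\sigma\tau(z)}{w}\right)_{4,M}}.
\]

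Next I apply Theorem \ref{t4R} to each of $(\sigma(w)/z)_{4,M}$ and $(\sigma\tau(w)/z)_{4,M}$ to flip them, absorbing the reciprocity prefactor $\mu$ (which depends only on classes modulo $16$) into $\delta(w,z)$. The first pair becomes
\[
\left(\frac{\sigma(w)}{z}\right)_{4,M}\left(\frac{\sigma(z)}{w}\right)_{4,M}=\delta(w,z)\left(\frac{\sigma(z)}{w}\right)_{4,M}^{2}=\delta(w,z)\gamma_1(w,z),
\]
since the square of a quartic symbol is the quadratic symbol. The second pair becomes
\[
\left(\frac{\sigma\tau(w)}{z}\right)_{4,M}\left(\frac{\sigma\tau(z)}{w}\right)_{4,M}=\delta(w,z)\left\lvert\left(\frac{\sigma\tau(z)}{w}\right)_{4,M}\right\rvert^{2}=\delta(w,z)\mathbbm{1}_{\gcd(w,\sigma\tau(z))=(1)},
\]
since $|\zeta|^{2}=1$ for $\zeta\in\{\pm 1,\pm i\}$ and vanishes otherwise. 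Multiplying the two gives the claimed formula.

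The only real obstacle is routine bookkeeping: one must check that each $\delta$-correction that appears depends only on $w,z$ modulo $2^{10}$. This is immediate because every correction comes from either the reciprocity prefactor $\mu$ of Theorem \ref{t4R} or from applying $\sigma,\sigma\tau$ to such a factor, and both operations descend to well-defined maps on $O_M/2^{10}O_M$.
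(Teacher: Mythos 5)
Your proposal is correct and follows essentially the same route as the paper: expand $[wz]_1$ by multiplicativity to isolate $[w]_1[z]_1$ and the four cross-symbols, then flip two of them with Theorem~\ref{t4R} and use the Galois-functoriality of the quartic symbol together with the fact that $\sigma$ fixes $i$ (giving $\gamma_1$) while $\sigma\tau$ conjugates (giving the coprimality indicator). The only cosmetic quibble is that the $(2-\sqrt{2})$-factors are not cancelled but simply absorbed into $[w]_1$ and $[z]_1$.
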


\begin{proof}
By definition of $[w]_1$ we have
\begin{align*}
[wz]_1 &= \left(\frac{\left(2 - \sqrt{2}\right)\sigma(wz)\sigma\tau(wz)}{wz}\right)_{4, M} \\
&= [w]_1 [z]_1 \left(\frac{\sigma(z)}{w}\right)_{4, M} \left(\frac{\sigma\tau(z)}{w}\right)_{4, M} \left(\frac{\sigma(w)}{z}\right)_{4, M} \left(\frac{\sigma\tau(w)}{z}\right)_{4, M}.
\end{align*}
Since $\sigma$ fixes $i$ and therefore any quartic residue symbol, Theorem \ref{t4R} yields
\begin{align*}
\left(\frac{\sigma(z)}{w}\right)_{4, M} \left(\frac{\sigma(w)}{z}\right)_{4, M} &= \delta(w, z) \left(\frac{\sigma(z)}{w}\right)_{4, M} \left(\frac{z}{\sigma(w)}\right)_{4, M} \\
&= \delta(w, z) \left(\frac{\sigma(z)}{w}\right)_{4, M} \sigma\left(\left(\frac{\sigma(z)}{w}\right)_{4, M}\right) \\
&= \delta(w, z) \left(\frac{\sigma(z)}{w}\right)_{2, M}.
\end{align*}
If we do the same computation for $\sigma\tau$, we obtain
\[
\left(\frac{\sigma\tau(z)}{w}\right)_{4, M} \left(\frac{\sigma\tau(w)}{z}\right)_{4, M} = \delta(w, z) \mathbbm{1}_{\gcd(w, \sigma\tau(z)) = (1)},
\]
since $\sigma\tau$ does not fix $i$. This proves the lemma.
\end{proof}

\noindent In the next lemma we collect the most important properties of $\gamma_1(w, z)$.

\begin{lemma}
\label{lPw1}
Let $w, z \in \Z[\zeta_8]$ be odd and define $\gamma_1(w, z)$ as in equation (\ref{egamma1}).
\begin{enumerate}
\item[(i)] $\gamma_1(w, z)$ is essentially symmetric
\[
\gamma_1(w, z) = \delta(w, z) \gamma_1(z, w).
\]
\item[(ii)] $\gamma_1(w, z)$ is multiplicative in both arguments
\[
\gamma_1(w, z_1z_2) = \gamma_1(w, z_1) \gamma_1(w, z_2), \quad \gamma_1(w_1w_2, z) = \gamma_1(w_1, z)\gamma_1(w_2, z).
\]
\end{enumerate}
\end{lemma}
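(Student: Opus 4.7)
The plan is to derive both parts directly from the elementary properties of the quadratic residue symbol together with Theorem \ref{t2R}, exploiting that $\sigma$ is a ring automorphism of $M = \Q(\zeta_8)$ that stabilizes the ideal $2\Z[\zeta_8]$, and that $M$ is totally complex (so every element is automatically totally positive and the reciprocity law of Theorem \ref{t2R} applies unconditionally).

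For part (ii), multiplicativity in both arguments is immediate from the definition. For the top argument, since $\sigma$ is a ring homomorphism we have $\sigma(z_1 z_2) = \sigma(z_1)\sigma(z_2)$, and then multiplicativity of $\left(\frac{\cdot}{w}\right)_{2,M}$ in the numerator gives
\[
\gamma_1(w, z_1 z_2) = \left(\frac{\sigma(z_1)\sigma(z_2)}{w}\right)_{2,M} = \gamma_1(w, z_1)\gamma_1(w, z_2).
\]
For the bottom argument, multiplicativity of the Jacobi-type symbol in the denominator (which is built into its definition in the excerpt) yields $\gamma_1(w_1 w_2, z) = \gamma_1(w_1, z)\gamma_1(w_2, z)$.

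For part (i), I would apply quadratic reciprocity (Theorem \ref{t2R}): since $M$ is totally complex, both $\sigma(z)$ and $w$ are totally positive, so
\[
\gamma_1(w, z) = \left(\frac{\sigma(z)}{w}\right)_{2,M} = \mu(\sigma(z), w) \left(\frac{w}{\sigma(z)}\right)_{2,M},
\]
where the reciprocity factor depends only on $\sigma(z)$ and $w$ modulo $8$. Because $\sigma$ preserves the ideal $2^k\Z[\zeta_8]$, the class of $\sigma(z)$ modulo $8$ is determined by the class of $z$ modulo $8$, hence certainly by the classes of $w$ and $z$ modulo $2^{10}$, so this factor is of the form $\delta(w,z)$. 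Then I would apply the automorphism $\sigma$ to the residue symbol itself: since its value lies in $\{\pm 1, 0\} \subseteq \Q$, it is $\sigma$-fixed, so
\[
\left(\frac{w}{\sigma(z)}\right)_{2,M} = \sigma\!\left(\left(\frac{w}{\sigma(z)}\right)_{2,M}\right) = \left(\frac{\sigma(w)}{\sigma^2(z)}\right)_{2,M} = \left(\frac{\sigma(w)}{z}\right)_{2,M} = \gamma_1(z, w),
\]
using $\sigma^2 = 1$. Combining the two displays gives $\gamma_1(w,z) = \delta(w,z)\gamma_1(z,w)$.

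There is no serious obstacle here; the only points that require a moment of care are verifying that the reciprocity factor $\mu(\sigma(z), w)$ genuinely descends to a $\delta(w,z)$ (which follows from the Galois-stability of $2\Z[\zeta_8]$) and justifying the ``$\sigma$-equivariance'' of the residue symbol, which is valid precisely because its values lie in the prime field fixed by $\sigma$.
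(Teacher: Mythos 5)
Your proof is correct, and since the paper's own proof is simply the word ``straightforward,'' the argument you supply is precisely the one the author has in mind: multiplicativity is immediate from the definitions, and the essential symmetry follows from Theorem~\ref{t2R} (applicable since $M$ is totally complex), the fact that the reciprocity factor $\mu(\sigma(z),w)$ depends only on $w,z$ modulo $8$ because $\sigma$ preserves $8\Z[\zeta_8]$, and the Galois-equivariance of the quadratic residue symbol together with $\sigma^2 = \mathrm{id}$.
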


\begin{proof}
This is straightforward.
\end{proof}

With this lemma we have completed our study of $[w]_1$ and $\gamma_1(w, z)$. We will now focus on $[w]_2$. Recall that
\[
[w]_2 = \left(\frac{2h}{g}\right) = \delta(w) \left(\frac{v}{u}\right).
\]
The second representation of $[w]_2$ is very convenient, since it allows us to use earlier work of Milovic \cite{Milovic2}. Define
\begin{align}
\label{egamma2}
\gamma_2(w, z) := \left(\frac{\sigma(wz)\sigma\tau(wz)}{w\tau(w)}\right)_{2, K},
\end{align}
where $K := \Q(\sqrt{2})$.

\begin{lemma}
\label{lMw2}
The following formula is valid for all odd $w, z \in \Z[\zeta_8]$
\[
[wz]_2 = \delta(w, z) [w]_2 [z]_2 \gamma_2(w, z).
\]
\end{lemma}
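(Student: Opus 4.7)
Write $w\tau(w) = u + v\sqrt{2}$ and $z\tau(z) = u' + v'\sqrt{2}$. Since $\tau$ is a ring homomorphism,
\[
(wz)\tau(wz) \;=\; (u + v\sqrt 2)(u' + v'\sqrt 2) \;=\; U + V\sqrt 2,
\]
with $U = uu' + 2vv'$ and $V = uv' + u'v$. Because $w, z$ are odd, $u, u'$ are odd and $v, v'$ are even, so $U$ is odd and $V$ is even, and the same derivation that gave (\ref{ew2uv}) shows $[w]_2 = \delta(w)(v/u)$, $[z]_2 = \delta(z)(v'/u')$, $[wz]_2 = \delta(wz)(V/U)$. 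Absorbing the three $\delta$-factors into $\delta(w,z)$, the claim reduces to the rational Jacobi identity
\[
\left(\frac{V}{U}\right) \;=\; \delta(w,z)\left(\frac{v}{u}\right)\left(\frac{v'}{u'}\right)\gamma_2(w,z).
\]

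The plan is to apply Proposition 2 of Milovic \cite{Milovic2} to each of the three Jacobi symbols. That proposition, which is essentially quadratic reciprocity in $K = \Q(\sqrt 2)$ (Theorem \ref{t2R}) combined with the congruence $v\sqrt 2 \equiv -u \bmod (u + v\sqrt 2)$, translates any such symbol $(v/u)$ (with $u$ odd rational, $v$ even rational, and $u + v\sqrt 2$ a norm from $M$ to $K$) into a quadratic residue symbol in $O_K = \Z[\sqrt 2]$ attached to $u + v\sqrt 2$, up to a factor depending only on $u, v \bmod 8$. After translating all three symbols into $K$, the multiplicativity $U + V\sqrt 2 = (u + v\sqrt 2)(u' + v'\sqrt 2)$ and one final reciprocity flip via Theorem \ref{t2R} collapse the defect to
\[
\left(\frac{(u - v\sqrt 2)(u' - v'\sqrt 2)}{u + v\sqrt 2}\right)_{2,K} \;=\; \left(\frac{\sigma(wz)\sigma\tau(wz)}{w\tau(w)}\right)_{2,K} \;=\; \gamma_2(w,z),
\]
where the first equality uses $\sigma(wz)\sigma\tau(wz) = \sigma(w)\sigma\tau(w)\cdot\sigma(z)\sigma\tau(z) = (u - v\sqrt 2)(u' - v'\sqrt 2) = U - V\sqrt 2$.

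The main obstacle is bookkeeping the $\delta$-factors emitted by each reciprocity flip: every flip contributes a sign depending on the arguments modulo $8$ in $O_K$, and one has to confirm that these residues are controlled by the congruence class of $(w, z)$ modulo $2^{10}$ in $\Z[\zeta_8]$. This is routine because $u, v$ are explicit polynomial expressions in the integer coefficients of $w$ (cf.\ (\ref{eFormulau}) and (\ref{eFormulav})), so $u, v \bmod 8$ are determined by $w$ modulo a small power of $2$ in $\Z[\zeta_8]$, and similarly for $z$; all accumulated unit factors can therefore be absorbed into the single function $\delta(w, z)$, which completes the argument.
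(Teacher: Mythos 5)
Your proposal is correct in substance but takes a genuinely different route from the paper. The paper's proof is a one-liner: after observing that $[w]_2 = \delta(w)\,[w\tau(w)]_3$ with $[u + v\sqrt 2]_3 := (v/u)$ in Milovic's notation, and that $w\tau(w)$ is totally positive, it simply invokes Proposition~8 of Milovic \cite{Milovic2}, which is precisely the statement that $[\,\cdot\,]_3$ is twisted multiplicative with the defect $\gamma_2$. You instead re-derive that proposition from scratch: you compute $U + V\sqrt 2 = (u + v\sqrt 2)(u' + v'\sqrt 2)$, reduce to the rational Jacobi identity for $(V/U)$, and then outline the translation of each $(v/u)$ into a $\Z[\sqrt 2]$-symbol via the congruence $v\sqrt 2 \equiv -u \bmod (u + v\sqrt 2)$ and Theorem~\ref{t2R}, after which multiplicativity and one more reciprocity flip leave exactly the defect $\left(\sigma(wz)\sigma\tau(wz)\big/w\tau(w)\right)_{2,K} = \gamma_2(w,z)$. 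This is sound, and it has the advantage of being self-contained; the price is that the unit-factor bookkeeping that you wave off as ``routine'' (and which you would need to actually carry out, checking that each $\mu(\cdot,\cdot)$ from Theorem~\ref{t2R} depends only on residues mod $8$ in $O_K$, hence on $w,z$ modulo a fixed power of $2$ in $\Z[\zeta_8]$) is precisely the content of Milovic's Proposition~8 that the paper outsources. One small slip: the result you want to invoke is Proposition~8, not Proposition~2, of \cite{Milovic2} — in this paper Proposition~2 is cited only for the invariance of $(v/u)$ under multiplication by $\epsilon^4$ in the proof of Lemma~\ref{lUnitAction}, which is a different statement. You should also note explicitly (as the paper does) that $w\tau(w)$ is totally positive, since Theorem~\ref{t2R} requires total positivity of one of the arguments.
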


\begin{proof}
Milovic \cite[p.\ 1009]{Milovic2} defines the following symbol
\[
[u + v\sqrt{2}]_3 := \left(\frac{v}{u}\right).
\]
Then it is easily seen that $[w]_2 = \delta(w) [w \tau(w)]_3$ and that $w\tau(w)$ is totally positive. Now apply Proposition 8 of Milovic \cite{Milovic2}.
\end{proof}

To further our study of $\gamma_2(w, z)$, it will be convenient to define a second function $\text{m}(w)$ by the following formula
\[
\text{m}(w) := \gamma_2(w, 1) = \left(\frac{\sigma(w)\sigma\tau(w)}{w\tau(w)}\right)_{2, K}.
\]
It turns out that $\gamma_2(w, z)$ is neither symmetric nor multiplicative. Instead, it is symmetric and multiplicative twisted by the factor $\text{m}$.

\begin{lemma}
\label{lPw2}
Let $w, z \in \Z[\zeta_8]$ be odd and define $\gamma_2(w, z)$ as in equation (\ref{egamma2}).
\begin{enumerate}
\item[(i)] $\gamma_2(w, z)$ is twisted symmetric
\[
\gamma_2(w, z) \gamma_2(z, w) = \textup{m}(wz).
\]
\item[(ii)] $\gamma_2(w, z)$ is twisted multiplicative in $z$
\[
\gamma_2(w, z_1z_2) = \textup{m}(w) \gamma_2(w, z_1) \gamma_2(w, z_2).
\]
\end{enumerate}
\end{lemma}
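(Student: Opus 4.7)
The proof is a direct manipulation of the defining formula for $\gamma_2$, exploiting the multiplicativity of the quadratic residue symbol in both the numerator and the denominator. Before starting, I would check that everything is well-defined: since $\tau$ generates $\Gal(M/K)$, the element $w\tau(w)$ lies in $O_K$ and is odd whenever $w$ is odd, so the symbol $\left(\frac{\cdot}{w\tau(w)}\right)_{2,K}$ makes sense. Moreover, since $\Gal(M/\Q) = \langle \sigma, \tau\rangle$ is abelian, $\tau$ commutes with $\sigma$, so $\sigma(wz)\sigma\tau(wz)$ is fixed by $\tau$ and therefore lies in $O_K$ as required.

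For part (i), the plan is to observe that the two factors $\gamma_2(w,z)$ and $\gamma_2(z,w)$ have identical numerators, so by multiplicativity of the quadratic residue symbol in the denominator their product equals
\[
\left(\frac{\sigma(wz)\sigma\tau(wz)}{w\tau(w)\cdot z\tau(z)}\right)_{2,K}.
\]
The key identity is then $w\tau(w)\cdot z\tau(z) = (wz)\tau(wz)$, which follows at once from $\tau$ being a ring homomorphism. This rewrites the denominator as $(wz)\tau(wz)$, and the resulting symbol is by definition $\textup{m}(wz)$.

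For part (ii), the idea is to combine $\gamma_2(w,z_1)$ and $\gamma_2(w,z_2)$ by multiplicativity of the symbol in the numerator, yielding
\[
\gamma_2(w,z_1)\gamma_2(w,z_2) = \left(\frac{\sigma(w)^2\sigma(z_1z_2)\cdot\sigma\tau(w)^2\sigma\tau(z_1z_2)}{w\tau(w)}\right)_{2,K}.
\]
Since the symbol takes values in $\{\pm 1, 0\}$, the square factors $\sigma(w)^2$ and $\sigma\tau(w)^2$ contribute trivially (as long as they are coprime to $w\tau(w)$, which we may assume by restricting to generic odd inputs, since otherwise both sides vanish simultaneously), so this reduces to
\[
\left(\frac{\sigma(z_1z_2)\sigma\tau(z_1z_2)}{w\tau(w)}\right)_{2,K}.
\]
Comparing with $\gamma_2(w,z_1z_2) = \left(\frac{\sigma(w)\sigma\tau(w)\cdot\sigma(z_1z_2)\sigma\tau(z_1z_2)}{w\tau(w)}\right)_{2,K}$, the ratio is exactly $\left(\frac{\sigma(w)\sigma\tau(w)}{w\tau(w)}\right)_{2,K} = \textup{m}(w)$, giving the claimed twisted multiplicativity.

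There is no real obstacle here; this is a bookkeeping exercise. The only thing worth being careful about is that all the relevant elements are coprime so that no symbol vanishes — but since the identities hold trivially when any argument fails to be coprime to $w\tau(w)$ or $z\tau(z)$ (both sides become zero), this requires no extra hypothesis beyond the odd assumption already stated in the lemma.
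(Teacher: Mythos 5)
Your proof is correct, and since the paper itself leaves this lemma to the reader there is no alternative argument to compare against; the direct verification via multiplicativity of the quadratic residue symbol in the numerator and denominator, together with the identity $w\tau(w)\cdot z\tau(z) = (wz)\tau(wz)$, is the natural (and essentially the only sensible) route. You have also correctly handled the degenerate cases where a symbol vanishes, noting that if $\textup{m}(w)=0$ then $\gamma_2(w,z)=0$ for every $z$, so both sides agree there too.
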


\begin{proof}
Left to the reader.
\end{proof}

With this out of the way we are ready to tackle the sums of type II. Let $\{\alpha_w\}$ and $\{\beta_z\}$ be sequences of complex numbers of absolute value at most $1$ and let $\rho$ and $\mu$ be invertible congruence classes modulo $2^{10}$. We define
\[
B_1(M, N, \rho, \mu) := \sum_{\substack{w \in \mathcal{D}(M) \\ w \equiv \rho \bmod 2^{10}}} \sum_{\substack{z \in \mathcal{D}(N) \\ z \equiv \mu \bmod 2^{10}}} \alpha_w \beta_z [wz],
\]
where we suppress the dependence on $\{\alpha_w\}$ and $\{\beta_z\}$. Then we have the following proposition.

\begin{prop}
\label{pBil}
There is an absolute constant $\theta_3 > 0$ such that for all sequences of complex numbers $\{\alpha_w\}$ and $\{\beta_z\}$ of absolute value at most $1$, all invertible congruence classes $\rho$ and $\mu$ modulo $2^{10}$
\[
B_1(M, N, \rho, \mu) \ll \left(M^{-\frac{1}{24}} + N^{-\frac{1}{24}}\right) MN (\log MN)^{\theta_3}.
\]
\end{prop}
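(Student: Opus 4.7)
The plan is to apply Cauchy--Schwarz in one of the two variables and then exploit the (twisted) multiplicativity of $\gamma_1$ and $\gamma_2$ to collapse the resulting inner sum to a single character sum over the other variable, which is estimated by short character sum bounds. By Lemma \ref{lMw1} and Lemma \ref{lMw2},
\[
[wz] = \delta(w,z)\, [w]_1 [z]_1 [w]_2 [z]_2\, \gamma_1(w,z)\, \gamma_2(w,z)\, \mathbbm{1}_{\gcd(w, \sigma\tau(z)) = (1)}.
\]
Since $w \equiv \rho$ and $z \equiv \mu$ modulo $2^{10}$, the factor $\delta(w,z)$ is a constant, and we absorb $[w]_1 [w]_2$ into $\alpha_w$ and $[z]_1 [z]_2$ into $\beta_z$ without altering the absolute-value bound. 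The coprimality indicator is resolved by M\"obius inversion on the common divisor $\mathfrak{e} \mid (w, \sigma\tau(z))$; truncating at divisors of norm at most $(\log MN)^A$ costs only a factor $(\log MN)^{\theta_3}$.

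Assume without loss of generality $M \leq N$; the opposite case follows by swapping the roles of $w$ and $z$ and yields the $M^{-1/24}$ term. Cauchy--Schwarz in $w$ yields
\[
|B_1|^2 \ll M \sum_{z_1, z_2 \in \mathcal{D}(N)} \beta'_{z_1} \overline{\beta'_{z_2}} \sum_{w \in \mathcal{D}(M)} \gamma_1(w, z_1) \gamma_1(w, z_2) \gamma_2(w, z_1) \gamma_2(w, z_2),
\]
where we used $\gamma_1, \gamma_2 \in \{\pm 1\}$. The diagonal $z_1 = z_2$ contributes $\ll M \cdot MN = M^2 N$, giving $|B_1| \ll M \sqrt{N} \ll MN \cdot N^{-1/2}$, which is well within the target bound.

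For the off-diagonal, Lemma \ref{lPw1}(ii) gives $\gamma_1(w, z_1)\gamma_1(w, z_2) = \gamma_1(w, z_1 z_2)$, while Lemma \ref{lPw2}(ii) gives $\gamma_2(w, z_1)\gamma_2(w, z_2) = \textup{m}(w)\, \gamma_2(w, z_1 z_2)$. Unwinding the definitions of $\gamma_1$, $\gamma_2$ and $\textup{m}$, the combined twist on $w$ simplifies to
\[
\left(\frac{\sigma(z_1 z_2)}{w}\right)_{2, M}\, \left(\frac{\sigma(z_1 z_2)\, \sigma\tau(z_1 z_2)}{w\tau(w)}\right)_{2, K},
\]
where the factors $\textup{m}(w)^2$ cancel. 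Applying Theorem \ref{t2R} (with $w\tau(w)$ totally positive in $K$) flips $w$ into the numerator, producing a short character sum in $w$ over a box of sidelength $\ll M^{1/4}$ in $\mathbb{Z}^4$, whose conductor has norm polynomial in $N$. For generic pairs $(z_1, z_2)$ this character is non-trivial, and we bound it by Burgess-type estimates in $\Z[\zeta_8]$, following the template of \cite{FIMR, FIMRE, KM2}. Summing over the $\ll N^2$ off-diagonal pairs and the logarithmically many M\"obius divisors and taking the square root yields the desired saving of $N^{-1/24}$ after symmetrization.

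The main obstacle is two-fold. First, the resulting character is hybrid: it is a product of a Jacobi symbol over $\Z[\zeta_8]$ and one over $\Z[\sqrt{2}]$, arising from the two intermediate fields of $M/\Q$ that underlie $\gamma_1$ and $\gamma_2$ respectively. One must verify that, after reciprocity, this symbol defines a genuinely non-trivial ray-class character of $w$ in $\Z[\zeta_8]$ for all but a thin set of pairs $(z_1, z_2)$; the degenerate pairs (where $z_1 z_2$ is essentially a square modulo units, or where the conductor otherwise collapses) form a lower-dimensional set of size $\ll N$ and are absorbed into the diagonal-type error. Second, one has to choose the Burgess parameters carefully so that the saving is of the stated polynomial size $M^{-1/12}$ (which becomes $M^{-1/24}$ after square-rooting) uniformly in the modulus, which is what forces the precise exponent $\theta_2 = 1/24$ and explains why the simpler Polya--Vinogradov bound is insufficient in this range of $M, N$.
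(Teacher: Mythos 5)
Your overall skeleton (expand via Lemma \ref{lMw1} and Lemma \ref{lMw2}, absorb $[w]_i$ and $[z]_i$ into the coefficients, exploit multiplicativity of $\gamma_1$ and $\gamma_2$) matches the paper, but the analytic core of your argument is different from the paper's and, as written, has a fatal gap.

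After you apply Cauchy--Schwarz in $w$ you arrive at an inner sum over $w$ of length $\asymp M^{1/4}$ in each coordinate, twisted by a character whose conductor is built out of $z_1 z_2$ and hence has norm up to $N^{O(1)}$. You then propose to estimate this by Burgess. But you have reduced to the case $M \leq N$, so the conductor is much larger than the length of the sum: Burgess over a number field only beats the trivial bound when the range of the variable exceeds roughly the fourth root of the modulus, and here you have $M^{1/4} \ll N^{O(1)}$ with $M \leq N$. In this regime Burgess, and even P\'olya--Vinogradov, give nothing better than the trivial bound $\ll M$ on the inner sum, so the whole estimate collapses to the trivial bound $\ll M^2 N^2$ for $|B_1|^2$. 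There is no way to recover the saving $N^{-1/24}$ from a single Cauchy--Schwarz followed by an incomplete-character-sum estimate.

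The paper circumvents this with a Vinogradov-style amplification. First one replaces $\gamma_2(w,z)$ by $\mathrm{m}(w)\gamma_3(w,z)$, absorbing $\mathrm{m}(w)$ into $\alpha_w$; this also disposes of the coprimality indicator via $\gamma_3(w,z)\mathbbm{1}_{\gcd(w,\sigma\tau(z))=(1)} = \gamma_3(w,z)$, rather than by M\"obius inversion. Then, crucially, one applies H\"older in $w$ with exponent $k = 12$ \emph{before} Cauchy--Schwarz. This turns the single $z$-variable into a $k$-fold Dirichlet convolution $\beta'_z$ supported on $z$ with coordinates $\ll N^{k/4}$. A subsequent Cauchy--Schwarz in $z$ then produces an inner complete sum over $z$ in a box of sidelength $\asymp N^{k/4} = N^3$, with modulus $\Norm(w_1 w_2) \leq M^2$. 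Because $N^{3} \gg M^{1/2}$ once $N \geq M$, this sum can be split into complete residue systems modulo $\Norm(w_1 w_2)$, and the paper shows the complete sum vanishes by orthogonality whenever $\Norm(w_1 w_2)$ is not squarefull. The saving that yields $\theta_2 = 1/24$ comes from this orthogonality step, not from Burgess. You would need to incorporate the H\"older amplification (or some equivalent way to make the completed variable much longer than the modulus) for the argument to have any hope of closing.
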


\begin{proof}
We start by expanding $[wz]$ using Lemma \ref{lMw1} and Lemma \ref{lMw2}. We may absorb $[w]_1$, $[w]_2$, $[z]_1$ and $[z]_2$ in the coefficients $\alpha_w$ and $\beta_z$. Then it suffices to prove for all sequences of complex numbers $\{\alpha_w\}$ and $\{\beta_z\}$ of absolute value at most $1$ and all invertible congruence classes $\rho$ and $\mu$ modulo $2^{10}$ the following estimate
\begin{align*}
B_2(M, N, \rho, \mu) &:= \sum_{\substack{w \in \mathcal{D}(M) \\ w \equiv \rho \bmod 2^{10}}} \sum_{\substack{z \in \mathcal{D}(N) \\ z \equiv \mu \bmod 2^{10}}} \alpha_w \beta_z \gamma_1(w, z) \gamma_2(w, z) \mathbbm{1}_{\gcd(w, \sigma\tau(z)) = (1)} \\
&\ll \left(M^{-\frac{1}{24}} + N^{-\frac{1}{24}}\right) MN (\log MN)^{\theta_3}.
\end{align*}
Define
\[
\gamma_3(w, z) := \left(\frac{\sigma(z)\sigma\tau(z)}{w\tau(w)}\right)_{2, K},
\]
so that we have the factorization $\gamma_2(w, z) = \text{m}(w) \gamma_3(w, z)$. Absorbing $\text{m}(w)$ in $\alpha_w$ and using the identity
\[
\gamma_3(w, z) \mathbbm{1}_{\gcd(w, \sigma\tau(z)) = (1)} = \gamma_3(w, z),
\]
we see that it is enough to establish
\begin{align*}
B_3(M, N, \rho, \mu) &:= \sum_{\substack{w \in \mathcal{D}(M) \\ w \equiv \rho \bmod 2^{10}}} \sum_{\substack{z \in \mathcal{D}(N) \\ z \equiv \mu \bmod 2^{10}}} \alpha_w \beta_z \gamma_1(w, z) \gamma_3(w, z) \\
&\ll \left(M^{-\frac{1}{24}} + N^{-\frac{1}{24}}\right) MN (\log MN)^{\theta_3}.
\end{align*}
Theorem \ref{t2R} shows that $\gamma_3(w, z)$ is also essentially symmetric, i.e.
\[
\gamma_3(w, z) = \delta(w, z)\gamma_3(z, w).
\]
Due to the symmetry of $\gamma_1(w, z)$, see Lemma \ref{lPw1}(i), and the symmetry of $\gamma_3(w, z)$, we may further reduce to the case $N \geq M$. We take $k := 12$ and apply H\"older's inequality with $1 = \frac{k - 1}{k} + \frac{1}{k}$ to the $w$ variable to obtain
\[
\left|B_3(M, N, \rho, \mu)\right|^k \leq
\left(\sum_{\substack{w \in \mathcal{D}(M) \\ w \equiv \rho \bmod 2^{10}}} \left|\alpha_w\right|^{\frac{k}{k - 1}}\right)^{k - 1} \sum_{\substack{w \in \mathcal{D}(M) \\ w \equiv \rho \bmod 2^{10}}} \left|\sum_{\substack{z \in \mathcal{D}(N) \\ z \equiv \mu \bmod 2^{10}}} \beta_z \gamma_1(w, z) \gamma_3(w, z)\right|^k.
\]
The first factor is trivially bounded by $\ll M^{k - 1}$ with absolute implied constant. Lemma \ref{lPw1}(ii) implies that $\gamma_1(w, z)$ is multiplicative in $z$ and Lemma \ref{lPw2}(ii) implies that $\gamma_3(w, z)$ is multiplicative in $z$. Hence $\gamma_1(w, z) \gamma_3(w, z)$ is multiplicative in $z$. We conclude that
\begin{align}
\label{eHolder}
\left|B_3(M, N, \rho, \mu)\right|^k \ll M^{k - 1} \sum_{\substack{w \in \mathcal{D}(M) \\ w \equiv \rho \bmod 2^{10}}} \epsilon(w) \sum_z \beta_z' \gamma_1(w, z) \gamma_3(w, z),
\end{align}
where
\[
\epsilon(w) := \left(\frac{\left|\sum_{\substack{z \in \mathcal{D}(N) \\ z \equiv \mu \bmod 2^{10}}} \beta_z \gamma_1(w, z) \gamma_3(w, z)\right|}{\sum_{\substack{z \in \mathcal{D}(N) \\ z \equiv \mu \bmod 2^{10}}} \beta_z \gamma_1(w, z) \gamma_3(w, z)}\right)^k
\]
and
\[
\beta_z' := \sum_{\substack{z = z_1 \cdot \ldots \cdot z_k \\ z_1, \ldots, z_k \in \mathcal{D}(N) \\ z_1 \equiv \ldots \equiv z_k \equiv \mu \bmod 2^{10}}} \beta_{z_1} \cdot \ldots \cdot \beta_{z_k}.
\]
We will now study the summation condition for $z$ in the inner sum of equation (\ref{eHolder}) more carefully. By construction, $\mathcal{D}(N)$ contains exactly eight generators of any principal ideal. Furthermore, there are $\ll N^k$ values of $z$ for which $\beta_z' \neq 0$. Hence we obtain the bound
\[
\sum_z \left(\beta_z'\right)^2 \ll (\log N)^{\theta_3} N^k
\]
for some absolute constant $\theta_3$, since $k$ is fixed.  An application of the Cauchy-Schwarz inequality over the $z$ variable yields
\begin{multline}
\label{eCauchy}
\left(\sum_{\substack{w \in \mathcal{D}(M) \\ w \equiv \rho \bmod 2^{10}}} \hspace{-0.5cm} \epsilon(w) \sum_z \beta_z' \gamma_1(w, z) \gamma_3(w, z)\right)^2 = \left(\sum_z \beta_z' \hspace{-0.3cm} \sum_{\substack{w \in \mathcal{D}(M) \\ w \equiv \rho \bmod 2^{10}}} \hspace{-0.5cm} \epsilon(w) \gamma_1(w, z) \gamma_3(w, z)\right)^2 \\
\ll (\log N)^{\theta_3} N^k \sum_{\substack{w_1 \in \mathcal{D}(M) \\ w_1 \equiv \rho \bmod 2^{10}}} \sum_{\substack{w_2 \in \mathcal{D}(M) \\ w_2 \equiv \rho \bmod 2^{10}}} \hspace{-0.5cm} \epsilon(w_1) \overline{\epsilon(w_2)} \sum_z \gamma_1(w_1w_2, z) \gamma_3(w_1w_2, z),
\end{multline}
because $\gamma_1(w, z)$ and $\gamma_3(w, z)$ are multiplicative in $w$. Conveniently, inequality (\ref{eCauchy}) remains valid if we extend the sum over $z$ to a larger domain. Let $z_1, \ldots, z_k \in \mathcal{D}(N)$ and write 
\[
z_i = \sum_{j = 1}^4 a_{ij} \zeta_8^j.
\]
Then we have $|a_{ij}| \ll N^{\frac{1}{4}}$. Now define
\[
\mathcal{B}(C) := \left\{\sum_{j = 1}^4 a_j \zeta_8^j : a_j \in \Z, |a_j| \leq CN^{\frac{k}{4}}\right\}.
\]
Then, if $C$ is sufficiently large, $\beta'_z \neq 0$ implies $z \in \mathcal{B}(C)$. For this choice of $C$, we extend the range of summation over $z$ in equation (\ref{eCauchy}) to the set $\mathcal{B}(C)$. We split the sum over $z$ in congruence classes $\zeta$ modulo $\Norm(w_1w_2)$; we claim that for all odd $w$ 
\[
\sum_{\zeta \bmod \Norm(w)} \gamma_1(w, \zeta) \gamma_3(w, \zeta) = 0
\]
provided that $\Norm(w)$ is not squarefull. Substituting the definition of $\gamma_1(w, \zeta)$ and $\gamma_3(w, \zeta)$ gives
\[
f(w) := \sum_{\zeta \bmod \Norm(w)} \gamma_1(w, \zeta) \gamma_3(w, \zeta) = \sum_{\zeta \bmod \Norm(w)} \left(\frac{\sigma(\zeta)\sigma\tau(\zeta)}{w\tau(w)}\right)_{2, K} \left(\frac{\sigma(\zeta)}{w}\right)_{2, M}.
\]
Then a calculation shows that for all odd $w$ and $w'$ satisfying $(\Norm(w), \Norm(w')) = 1$
\[
f(ww') = f(w)f(w').
\]
Hence, to establish the claim, it is enough to prove that $f(w) = 0$ if $w$ is an odd prime of degree $1$. To do so, we start with the identity
\[
\left(\frac{\sigma(\zeta)\sigma\tau(\zeta)}{w\tau(w)}\right)_{2, K} = \left(\frac{\sigma(\zeta)\sigma\tau(\zeta)}{w}\right)_{2, M}.
\]
Here we rely in an essential way that $w$ is an odd prime of degree $1$, so we have an isomorphism of finite fields $O_M/w \cong O_K/w\tau(w)$.  We use this to give a simple expression for $f(w)$
\[
f(w) = \sum_{\zeta \bmod \Norm(w)} \left(\frac{\sigma\tau(\zeta)}{w}\right)_{2, M} \mathbbm{1}_{(\sigma(\zeta), w) = (1)},
\]
which apart from a non-zero factor is
\[
\sum_{\zeta \bmod \sigma(w)\sigma\tau(w)} \left(\frac{\sigma\tau(\zeta)}{w}\right)_{2, M} \mathbbm{1}_{(\sigma(\zeta), w) = (1)} = 
\sum_{\zeta \bmod \sigma\tau(w)} \left(\frac{\sigma\tau(\zeta)}{w}\right)_{2, M} \sum_{\zeta \bmod \sigma(w)} \mathbbm{1}_{(\sigma(\zeta), w) = (1)} = 0.
\]
Note that $\sigma(w)$ and $\sigma\tau(w)$ are coprime, so that we are allowed to expand the sum over $\sigma(w) \sigma\tau(w)$ as the product of the two sums over $\sigma(w)$ and $\sigma\tau(w)$. With the claim established, we can give an upper bound for the sum over $z \in \mathcal{B}(C)$
\[
\sum_{z \in \mathcal{B}(C)} \gamma_1(w_1w_2, z) \gamma_3(w_1w_2, z) \ll
\left\{
	\begin{array}{ll}
		N^k  & \mbox{if } \Norm(w_1w_2) \text{ is squarefull} \\
		\sum_{i = 1}^4 M^{2i} N^{k\left(1 - \frac{i}{4}\right)} & \mbox{otherwise,}
	\end{array}
\right.
\]
where the second bound uses the claim and $\Norm(w_1w_2) \leq M^2$. Because of our choice of $k$ and $N \geq M$, we can simplify the second bound to $M^2 N^{\frac{3}{4}k}$. Equation (\ref{eHolder}), equation (\ref{eCauchy}) and the above bound acquire the validity of
\begin{align*}
\left|B_3(M, N, \rho, \mu)\right|^{2k} &\ll (\log N)^{\theta_3} M^{2k - 2}N^k\left(M \cdot N^k + M^2 \cdot M^2N^{\frac{3}{4}k}\right) \\
&\ll (\log N)^{\theta_3} \left(M^{2k - 1} \cdot N^k + M^{2k + 2} \cdot N^{\frac{7}{4}k}\right).
\end{align*}
Since the first term above dominates the second term due to our choice of $k$ and $N \geq M$, the proof of the proposition is complete.
\end{proof}

Having dealt with sums of type II for the symbol $[wz]$, we now turn to sums of type II with $a_{\mathfrak{m}\mathfrak{n}}$. For sequences of complex numbers $\{\alpha_{\mathfrak{m}}\}$ and $\{\beta_{\mathfrak{n}}\}$ of absolute value at most $1$ we defined in Section \ref{sSieve} the following sum
\[
B(M, N) = \sum_{\Norm{\mathfrak{m}} \leq M} \sum_{\Norm{\mathfrak{n}} \leq N} \alpha_{\mathfrak{m}} \beta_{\mathfrak{n}} a_{\mathfrak{m}\mathfrak{n}}.
\]

\begin{prop}
\label{pSumII}
There is an absolute constant $\theta_3 > 0$ such that for all sequences of complex numbers $\{\alpha_{\mathfrak{m}}\}$ and $\{\beta_{\mathfrak{n}}\}$ of absolute value at most $1$
\[
B(M, N) \ll \left(M^{-\frac{1}{24}} + N^{-\frac{1}{24}}\right) MN (\log MN)^{\theta_3}.
\]
\end{prop}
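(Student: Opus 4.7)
The plan is to reduce $B(M,N)$ to the already-bounded sum $B_1(M,N,\rho,\mu)$ from Proposition \ref{pBil} by unfolding the definition of $a_{\mathfrak{n}}$ and splitting into residue classes modulo $2^{10}$. First I would use Lemma \ref{lFD} to rewrite the sum over ideals as a sum over elements in the fundamental domain: since the torsion subgroup of $\Z[\zeta_8]^{\ast}$ has order $8$, every non-zero ideal has exactly $8$ generators in $\mathcal{D}$, giving
\[
B(M,N) = \frac{1}{64} \sum_{\substack{w \in \mathcal{D}(M) \\ w \neq 0}} \sum_{\substack{z \in \mathcal{D}(N) \\ z \neq 0}} \alpha_{(w)} \beta_{(z)}\, a_{(wz)}.
\]
Only odd $w$ and $z$ contribute, since $a_{\mathfrak{n}} = 0$ when $\mathfrak{n}$ is even. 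Using the definition (\ref{ean}) with the generator $wz$ of $(wz)$, this becomes
\[
B(M,N) = \frac{1}{256} \sum_{j=0}^{3} \sum_{\substack{w \in \mathcal{D}(M) \\ w \text{ odd}}} \sum_{\substack{z \in \mathcal{D}(N) \\ z \text{ odd}}} \alpha_{(w)} \beta_{(z)}\, \mathbf{1}_{wz \text{ sat. } (\ref{e8p})}\, [\epsilon^j wz],
\]
and the indicator depends only on $wz$ modulo a small power of $2$, hence on $w, z$ modulo $2^{10}$. Splitting into residue classes $\rho, \mu$ modulo $2^{10}$ reduces the problem to finitely many sums of the shape $\sum_{w,z} \alpha_{(w)} \beta_{(z)} [\epsilon^j wz]$ with $w \equiv \rho$ and $z \equiv \mu$ modulo $2^{10}$.

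Next I would show that for fixed $j, \rho, \mu$ the quantity $[\epsilon^j wz]$ equals $[wz]$ up to a constant of modulus one depending only on $j, \rho, \mu$. Applying Lemma \ref{lMw1} and Lemma \ref{lMw2} with the factorisation $\epsilon^j wz = \epsilon^j \cdot (wz)$ gives
\[
[\epsilon^j wz]_1 = \delta(\epsilon^j, wz)\, [\epsilon^j]_1 [wz]_1 \gamma_1(\epsilon^j, wz)\, \mathbf{1}_{\gcd(\epsilon^j, \sigma\tau(wz)) = (1)},
\]
and analogously for $[\cdot]_2$ with $\gamma_2$ in place of $\gamma_1$. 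Because $\epsilon^j$ is a unit, the quartic and quadratic symbols with $\epsilon^j$ in the denominator (which appear in $[\epsilon^j]_1$, $[\epsilon^j]_2$, $\gamma_1(\epsilon^j, wz)$ and $\gamma_2(\epsilon^j, wz)$) are trivial empty products, the coprimality condition is automatic, and the residual factors $\delta(\epsilon^j, wz)$ depend only on $wz$ modulo $2^{10}$, hence only on $(\rho, \mu)$. Consequently
\[
[\epsilon^j wz] = c_{j,\rho,\mu}\, [wz], \qquad |c_{j,\rho,\mu}| \leq 1,
\]
for some constant independent of $w$ and $z$. Setting $\alpha_w := \alpha_{(w)}$ and $\beta_z := \beta_{(z)}$, which are well-defined sequences of absolute value at most $1$, the inner sum is exactly $c_{j,\rho,\mu}\, B_1(M,N,\rho,\mu)$.

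Finally I would invoke Proposition \ref{pBil} on each piece; since the outer sum over $j$ and invertible $(\rho, \mu) \in (O_M/2^{10}O_M)^{\ast} \times (O_M/2^{10}O_M)^{\ast}$ has $O(1)$ terms, the triangle inequality yields
\[
B(M,N) \ll \left(M^{-\frac{1}{24}} + N^{-\frac{1}{24}}\right) MN\, (\log MN)^{\theta_3},
\]
with the same absolute constant $\theta_3$ as in Proposition \ref{pBil}. The only genuinely delicate step is the identity $[\epsilon^j wz] = c_{j,\rho,\mu} [wz]$; this boils down to the observation that every multiplicative correction indexed by a unit collapses either to a trivial symbol or to a function of a fixed residue class modulo $2^{10}$, so the unit translation is absorbed into a constant without obstructing the bilinear structure needed for Proposition \ref{pBil}.
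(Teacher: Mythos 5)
Your proposal matches the paper's proof almost exactly: pick generators to pass from ideals to elements of $\mathcal{D}$, split into invertible residue classes modulo $2^{10}$, use Lemmas \ref{lMw1} and \ref{lMw2} applied to the factorisation $\epsilon^j\cdot(wz)$ to show $[\epsilon^j wz]$ equals $[wz]$ times a constant depending only on $j$ and the residue classes, then invoke Proposition \ref{pBil}. One small inaccuracy in the justification: $[\epsilon^j]_2 = \left(\frac{2h}{g}\right)$ is a Jacobi symbol with the rational integer $g$ (not $\epsilon^j$) in the denominator, so it is not a trivial empty product; it is nonetheless a fixed constant in $\{0,\pm 1\}$ for each $j$, so it still gets absorbed into $c_{j,\rho,\mu}$ and the conclusion is unaffected.
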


\begin{proof}
By picking generators for $\mathfrak{m}$ and $\mathfrak{n}$ we obtain the following identity
\[
B(M, N) = \sum_{\Norm{\mathfrak{m}} \leq M} \sum_{\Norm{\mathfrak{n}} \leq N} \alpha_{\mathfrak{m}} \beta_{\mathfrak{n}} a_{\mathfrak{m}\mathfrak{n}} = \frac{1}{64} \sum_{w \in \mathcal{D}(M)} \sum_{z \in \mathcal{D}(N)} \alpha_w \beta_z a_{(wz)}.
\]
We split the sum $B(M, N)$ in congruence classes modulo $2^{10}$. We need only consider invertible congruence classes, since otherwise $a_{wz} = 0$ by definition. Furthermore, condition (\ref{e8p}) depends only on $g$ modulo $4$, which is in turn determined by $w$ modulo $4$. Therefore, it suffices to bound the following sum
\[
\sum_{\substack{w \in \mathcal{D}(M) \\ w \equiv \rho \bmod 2^{10}}} \sum_{\substack{z \in \mathcal{D}(N) \\ z \equiv \mu \bmod 2^{10}}} \alpha_w \beta_z \left([wz] + [\epsilon wz] + [\epsilon^2 wz] + [\epsilon^3 wz]\right),
\]
where $\rho$ and $\mu$ are invertible congruence classes modulo $2^{10}$ such that $g \equiv 1 \bmod 4$. From Lemma \ref{lMw1} and Lemma \ref{lMw2} we deduce that
\[
[\epsilon wz] = \delta(w, z) [\epsilon] [wz].
\]
Now apply Proposition \ref{pBil}.
\end{proof}


\begin{thebibliography}{20}
\bibitem{Alberts}
B. Alberts. Cohen-Lenstra Moments for Some Nonabelian Groups. \textit{arXiv preprint}, 2016.

\bibitem{AK}
B. Alberts and J. Klys. The distribution of $H_8$-extensions of quadratic fields. \textit{arXiv preprint}, 2017.

\bibitem{BH}
N. Bruin and B. Hemenway. On congruent primes and class numbers of imaginary quadratic fields. \textit{Acta Arith.}, 159(1):63-87, 2013.

\bibitem{Burgess}
D.A. Burgess. On character sums and $L$-series. II. \textit{Proc. London Math. Soc.}, 13:524-536, 1963.

\bibitem{CohnLag}
H. Cohn and J.C. Lagarias. On the existence of fields governing the $2$-invariants of the classgroup of $\Q(\sqrt{dp})$ as $p$ varies. \textit{Math. Comp.}, 41(164):711-730, 1983.

\bibitem{CohnLag2}
H. Cohn and J.C. Lagarias. Is there a density for the set of primes $p$ such that the class number of $\Q(\sqrt{-p})$ is divisible by $16$? \textit{Topics in classical number theory, Vol. I, II (Budapest, 1981)}, 257–280, Colloq. Math. Soc. J\'anos Bolyai, 34, \textit{North-Holland, Amsterdam}, 1984.

\bibitem{DH}
H. Davenport and H. Heilbronn. On the density of discriminants of cubic fields, II. \textit{Proc. Roy. Soc. Lond. A}, 322:405-420, 1971. 

\bibitem{FK1}
\'E. Fouvry and J. Kl\"uners. Cohen-Lenstra heuristics of quadratic number fields. \textit{Algorithmic number theory}, 40–55, Lecture Notes in Comput. Sci., 4076, Springer, Berlin, 2006.

\bibitem{FK2}
\'E. Fouvry and J. Kl\"uners. On the $4$-rank of class groups of quadratic number fields. \textit{Invent. Math.}, 167(3):455-513, 2007.

\bibitem{FI1}
J.B. Friedlander and H. Iwaniec. The polynomial $X^2 + Y^4$ captures its primes. \textit{Ann. of Math.}, 148(3):945-1040, 1998.

\bibitem{FIMR}
J.B. Friedlander, H. Iwaniec, B. Mazur and K. Rubin. The spin of prime ideals. \textit{Invent. Math.}, 193(3):697-749, 2013.

\bibitem{FIMRE}
J.B. Friedlander, H. Iwaniec, B. Mazur and K. Rubin. Erratum to: The spin of
prime ideals. \textit{Invent. Math.}, 202(2):923-925, 2015.

%Use \url{https://link.springer.com/content/pdf/10.1007/BF01388835.pdf}.
\bibitem{Gerth1}
F. Gerth. The $4$-class ranks of quadratic fields. \textit{Invent. Math.}, 77(3):489-515, 1984.

\bibitem{Hasse}
H. Hasse. \"{U}ber die Klassenzahl des K\"orpers $P(\surd -2p)$ mit einer Primzahl $p \neq 2$. \textit{J. Number Theory}, 1:231-234, 1969.
     
\bibitem{HB}
D.R. Heath-Brown. The size of Selmer groups for the congruent number problem, II. \textit{Invent. Math.}, 118(2):331-370, 1994.

\bibitem{Klys}
J. Klys. Moments of unramified $2$-group extensions of quadratic fields. \textit{arXiv preprint}, 2017.

\bibitem{KM2}
P. Koymans and D. Milovic. On the $16$-rank of class groups of $\Q(\sqrt{-2p})$ for primes $p \equiv 1 \bmod 4$. \textit{Int. Math. Res. Notices}, To Appear.

\bibitem{KM1}
P. Koymans and D. Milovic. Spins of prime ideals and the negative Pell equation $x^2 - 2py^2 = -1$. \textit{Compos. Math.}, 155(1):100-125, 2019.

\bibitem{Lemmermeyer}
F. Lemmermeyer. Reciprocity laws. \textit{Springer Monographs in Mathematics}. Springer-Verlag, Berlin, 2000.

\bibitem{LW82}
P.A. Leonard and K.S. Williams. On the divisibility of the class numbers of $Q(\sqrt{-p})$ and $Q(\sqrt{-2p})$ by $16$. \textit{Canad. Math. Bull.}, 25(2):200-206, 1982.

\bibitem{Milovic2}
D. Milovic. On the $16$-rank of class groups of $\Q(\sqrt{-8p})$ for $p \equiv -1 \bmod 4$. \textit{Geom. Func. Anal.}, 27(4):973-1016, 2017.

\bibitem{Milovic1}
D. Milovic. The infinitude of $\Q(\sqrt{-p})$ with class number divisible by $16$. \textit{Acta Arith.}, 178(3):201-233, 2017.

\bibitem{Redei}
L. R\'edei. Arithmetischer Beweis des Satzes \"uber die Anzahl der durch vier teilbaren Invarianten der absoluten Klassengruppe im quadratischen Zahlk\"orper. \textit{J. Reine Angew. Math.}, 171:55-60, 1934.

\bibitem{Smith1}
A. Smith. Governing fields and statistics for $4$-Selmer groups and $8$-class groups. \textit{arXiv preprint}, 2016.

\bibitem{Smith2}
A. Smith. $2^\infty$-Selmer groups, $2^\infty$-class groups, and Goldfeld's conjecture.
\textit{arXiv preprint}, 2017.

\bibitem{Ste1}
P. Stevenhagen. Class groups and governing fields. PhD Thesis, University of California, Berkeley, 1988.

\bibitem{Ste2}
P. Stevenhagen. Divisibility by $2$-powers of certain quadratic class numbers. \textit{J. Number Theory}, 43(1):1-19, 1993.

\bibitem{Wood}
M.M. Wood. Non-Abelian Cohen-Lenstra moments. \textit{Duke Math. J.}, To Appear.
\end{thebibliography}
\end{document}